\newtheorem{theorem}{Theorem}[section]
\newtheorem{lemma}[theorem]{Lemma}
\newtheorem{corollary}[theorem]{Corollary}
\newtheorem{example}[theorem]{Example}
\newtheorem{definition}[theorem]{Definition}
 \theoremstyle{remark}
 \newtheorem{remark}[theorem]{Remark}
\renewcommand{\Re}{\mathrm{Re\,}}
\renewcommand{\Im}{\mathrm{Im\,}}
\newcommand{\<}{\langle}
\renewcommand{\>}{\rangle}
\renewcommand{\H}{\mathcal{H}}
\newcommand{\BL}{\mathcal{L}}
\newcommand{\R}{\mathbb{R}}		
\newcommand{\N}{\mathbb{N}}	
\newcommand{\C}{\mathbb{C}}	
\newcommand{\F}{\mathbb{F}}	
\newcommand{\D}{\mathcal{D}}
\newcommand{\norm}[1]{\left\| #1 \right\|}	
\newcommand{\abs}[1]{\left| #1 \right|}	
\newcommand{\eps}{\varepsilon}		
\renewcommand{\phi}{\varphi}		
\newenvironment{psmallmatrix}
  {\left[\begin{smallmatrix}}
  {\end{smallmatrix}\right]}
	\newcommand{\ran}{\mathop{\mathrm{ran}}}
\title{Well-posedness of systems of 1-D hyperbolic partial differential equations}
\author{Birgit Jacob\footnote{University of Wuppertal, School of Mathematics and Natural Sciences,
Gau\ss stra\ss e 20,
D-42119 Wuppertal, Germany, bjacob@uni-wuppertal.de}
 \and Julia T.~Kaiser\footnote{University of Wuppertal, School of Mathematics and Natural Sciences,
Gau\ss stra\ss e 20,
D-42119 Wuppertal, Germany. Support by Deutsche Forschungsgemeinschaft (Grants  JA 
735/8-1 
and JA 735/13-1) 
is gratefully acknowledged.~julia.kaiser@uni-wuppertal.de}}
\date{}
\begin{document}
\maketitle

\begin{abstract}
We consider the well-posedness of a class of hyperbolic partial differential equations on a one dimensional spatial domain. 
This class includes in particular infinite networks of transport, wave and beam equations, or even combinations of these. Equivalent conditions 
for contraction semigroup generation are derived. We consider these equations on  a finite interval as well as  on a semi-axis.
\end{abstract}

{\bf Keywords:} $C_0$-semigroups, contraction semigroup, hyperbolic partial differential equations, port-Hamiltonian 
differential equations, networks of partial differential equations. \\

{\bf Mathematics Subject Classification:} 47D06, 35L40, 35L25, 37K99.
\section{Introduction}

We consider on an interval $J$ a system of partial differential equations of the following form
\begin{align}\label{eqn:pde}
\frac{\partial x}{\partial t}(\zeta,t) =& \left(\sum_{k=0}^N P_k
  \frac{\partial^k}{\partial \zeta^k} \right) ({\cal H}({\zeta})
x(\zeta,t)),\qquad \zeta \in J, t\ge 0,\\
x(\zeta,0) =& x_0(\zeta),\nonumber
\end{align}
where $P_N$ is an invertible operator on a Hilbert space $H$ 
and  $P_k\in {\cal L}(H)$, $k=0,\cdots, N$,  with $P_k^*=(-1)^{k+1}P_k$, $k=1,\cdots,N$. Here $\BL(H)$ denotes the set of linear bounded operators on $H$.
${\cal H}(\zeta)$ is a positive operator on $H$ for a.e.~$\zeta\in J$  satisfying ${\cal H}, {\cal H}^{-1}\in 
L^{\infty}(J;{\cal L}(H))$. Thereby, the interval $J$ is either a finite interval or a semi-axis. Without loss of 
generality we consider the finite interval $[0,1]$ and the semi-axis $[0,\infty)$.
This class of partial differential equations covers coupled wave and beam equations and in particular infinite networks of 
these equations, that means a network with an infinite number of edges. There has been an enormous development in the study of the Cauchy problem 
\eqref{eqn:pde} in the case of a finite-dimensional Hilbert space $H$ and a finite interval $J$, see for example 
\cite{BastinCoron,Engel13, JZ, GZM, VanDerSchaftMaschke_2002,Villegas_2007, ZGMV} and the references therein. These 
systems are 
also known as port-Hamiltonian systems, 
Hamiltonian partial differential equations or systems of linear conservation laws. In particular, contraction semigroup 
generation has been studied in \cite{GZM,JZ,AJ,Augner,JaMoZw}. In this paper we aim to generalize these results to 
the infinite-dimensional situation and to the 
semi-axis.
In order to guarantee unique solutions of equation~(\ref{eqn:pde}), we have to impose 
boundary conditions, which will be of the  form
\begin{equation}
\label{eq:bc}
   \hat{W}_B (\Phi({\cal H}x))(\cdot,t)  =0.
\end{equation}
In the case of the finite interval $J=[0,1]$, we assume $\hat{W}_B\in {\cal L}(H^{2N}, H^N)$ and that the operator $\Phi$ is  
given by
\begin{align*} \Phi:{\mathcal 
W}^{N,2}(J;H)\rightarrow H^{2N}, \qquad \Phi(x):=[\Phi_1(x) \; \Phi_0(x)]^T, 
\end{align*}
where $\Phi_i(x):=\left[x(i)\; \ldots \;\frac{d^{N-1}x}{d\zeta^{N-1}}(i)\right]^T$ for $i\in \{0,1\}$ and  $ {\mathcal 
W}^{N,2}(J;H)$ denotes the Sobolev space of order $N$.
If $J=[0,\infty)$, then $\hat{W}_B\in {\cal L}(H^N, \tilde H^N)$, where $\tilde H$ is a subspace of $H$, and  $\Phi$ is  
given by
\begin{align*} \Phi:{\mathcal 
W}^{N,2}(J;H)\rightarrow H^{N}, \qquad \Phi(x):=\Phi_0(x).
\end{align*}
Clearly, whether or not equation (\ref{eqn:pde}) possesses unique  and non\--increa\-sing solutions depend on the 
boundary conditions, or equivalently on the operator $\hat{W}_B$. 
The partial differential equation (\ref{eqn:pde}) with the
boundary conditions (\ref{eq:bc}) can be equivalently written as the abstract
Cauchy problem 
\begin{equation*}
   \dot{x} (t) = A {\cal H} x(t),\qquad   x(0)= x_0 ,
\end{equation*}
where $A$ is the linear operator on the Hilbert space $X:=L^2(J;H)$ given  by 
\begin{equation}\label{operatorA}
Ax:=  \sum_{k=0}^N P_k
  \frac{\partial^k x}{\partial \zeta^k}, \qquad x\in \D(A), 
\end{equation}
\begin{equation}
\label{domainA}
\D(A) := \left\{ x\in {\mathcal W}^{N,2}(J;H)\mid  \hat{W}_B\Phi( x) = 0 \right\}.
\end{equation}
We equip $X$ with $\<\cdot,\cdot\>_{L^2}$, the standard scalar product of $L^2(J;H)$. For convenience, we often write 
$\<\cdot,\cdot\>$ instead.

{The aim of this paper is to give equivalent conditions  for the fact that $A {\cal H}$ generates a contraction semigroup on 
$X$ equipped with the norm $\langle \cdot, {\mathcal H}\cdot\rangle$.} If $J=[0,1]$, then under a weak condition,  we show that $A {\cal H}$ generates a contraction semigroup if and only if 
the operator $A$ is dissipative. Moreover, equivalent conditions in terms of the operator  $\hat{W}_B$ are presented. We 
note that the mentioned weak condition is in particular satisfied if the Hilbert space $H$ is finite-dimensional. However, 
even if $H$ is finite-dimensional, our result contains new equivalent conditions for the   contraction semigroup 
characterization \cite{AJ}. For the case $J=[0,\infty)$, the contraction semigroup property has been shown for some 
specific 
examples \cite[I.4.16]{EN}, \cite{MuNoSe16}, and related results can be found in \cite{BK}, \cite{EF}, \cite{KPS}, 
\cite{KS} and \cite{SSVW}. If $J=[0,\infty)$, $N=1$ and 
$H=\mathbb C^d$ or $\mathbb R^d$,  we provide a characterization of the contraction semigroup property of the operator  $A 
{\cal H}$. Again $A {\cal H}$ generates a contraction semigroup  if and only if the operator $A$ is dissipative. 
The main difference to the case $J=[0,1]$ is that the number of boundary conditions depends on $P_1$.
We conclude the paper with some  examples to illustrate our results.

\section{Main results}\label{sec:main}

In this section, we formulate the main results of the paper for both cases $J=[0,1]$ and $J=[0,\infty)$. The proof of all 
theorems and corollaries
are given in Sections \ref{sec:main1} and \ref{ch:proof}.
We define
\begin{align} \label{defQ}  
Q=( Q_{ij})_{ \substack{ 1 \leq i \leq N \\
1 \leq j \leq N} }&= \begin{cases} (-1)^{i-1} P_{i+j-1} & \text{ if }i+j\leq N+1 \\
0 & \text{ else.}\end{cases} 
\end{align}
Clearly, $Q_{ij} \in \BL(H)$, i.e. $ Q \in \BL(H^N)$ and 
\begin{align*}
Q=\begin{bmatrix}  P_1 & P_2 & P_3 & \cdots &P_{N-1} & P_N \\
-P_2 & -P_3& 
-P_4& \cdots & -P_N & 0 \\
P_3&P_4& \iddots &  \iddots&  0 & 0 \\
-P_4& \iddots &  \iddots&\iddots &  & \vdots \\
\vdots & \iddots & \iddots& & & \vdots \\
(-1)^{N-1}P_N& 0 &\cdots &\cdots &\cdots & 0\end{bmatrix}.
\end{align*}
Thus, $Q \in \BL(H^N)$  is a selfadjoint block operator matrix and invertible due to the fact that $P_N$ is invertible.
Let
$$W_B:=\left[\begin{matrix}  W_1 & W_2   
\end{matrix}\right]:=\hat{W}_B\begin{bmatrix}  Q & - Q \\ I & I  
\end{bmatrix}^{-1}\quad \mbox{and} \quad\Sigma := \begin{bmatrix} 0 & I \\ I & 0 
\end{bmatrix}\in{\cal L}(H^N\times H^N),$$
where $W_1, W_2 \in \BL(H^N)$.
Let $P\in \BL(H)$. We call $P$ negative semi-definite, in short $P\leq 0$, if $\<x,Px\>_{H} \leq 0$ for all $x \in H$. We define $\Re P=\frac{1}{2}(P+P^*)$ and $\Im P=\frac{1}{2i}(P-P^*)$. Thus, $P=\Re P+i\Im P$ and $\Re P \leq 0$ if and only if $\<x, \Re P x\>_H=\Re\<x,Px\>_H \leq 0$.

\subsection{Main results for $J=[0,1]$}

In this subsection, we consider the  operator $A{\cal H}$ on the Hilbert space $X=L^2([0,1];H)$, where $H$ is a (possibly
infinite-dimensional) Hilbert space.
\begin{theorem}\label{theo1}  
 Let $A$ be given by \eqref{operatorA}-\eqref{domainA}. Further,
assume 
 \begin{equation}\label{ranbed}
 \ran \,(W_1-W_2)\subseteq  \ran\,(W_1+W_2).
 \end{equation}
Then the following statements are equivalent:
\begin{enumerate}
\item The operator $A{\cal H}$ with domain $$\D(A{\cal H})={\cal 
H}^{-1}\D(A)= \{x\in X\mid {\cal H}x\in  {\mathcal W}^{N,2}([0,1];H)\mbox{ and } \hat{W}_B\Phi(\H  x) = 0\}$$ generates a contraction semigroup on $(X,\langle \cdot,{\cal H} 
\cdot\rangle)$;
\item $A$ is dissipative, that is, $\Re\langle Ax,x\rangle\le 0$ for every $x\in \D(A)$;
\item     $\Re P_0\le 0$, $W_1+W_2$ is injective and $W_B\Sigma W_B^*\ge 0$;
\item   $\Re P_0\le 0$, $W_1+W_2$ is injective and there exists $V\in {\cal L}(H)$ with 
$\|V\|\le 1$ such that
$W_B= \frac{1}{2}(W_1+W_2)\begin{bmatrix} I+V &I-V\end{bmatrix}$;
\item  $\Re P_0\le 0$ and  $u^*Q u- y^*Q y\le 0$  for every $\left[\begin{smallmatrix} 
u \\ y \end{smallmatrix}\right]\in \ker \hat W_B$.
\end{enumerate}
\end{theorem}

\begin{remark}\label{remark}
\begin{enumerate}
\item Condition \eqref{ranbed}  is in general not satisfied: Let $N=1$, $H=\ell^2$ and 
$W_B=\begin{bmatrix}W_1& W_2 \end{bmatrix} \in \BL(\ell^2\times \ell^2,\ell^2)$ with 
$W_1 e_i:= e_{i+1}+e_i$ and $W_2 e_i:= e_{i+1}-e_i$, where $\{e_i\}_{i\in \mathbb N}$ is a  
orthonormal basis of $\ell^2$. Then $\ran (W_1-W_2)= \ell^2$ whereas $e_1\not\in\ran (W_1+W_2)$.

\item We point out that the implications $1 \Rightarrow 2$, $4\Rightarrow 3$, and the equivalence $2\Leftrightarrow 5$ 
hold even without the additional condition \eqref{ranbed}. Moreover, condition \eqref{ranbed} is not needed for the fact that $2$ implies the injectivity of $W_1+W_2$. 

\item We note that $W_B$ is not uniquely determined, only the kernel of $W_B$  is. However, 
 if $W_B$ does not satisfy condition \eqref{ranbed}, then  in general  it is not possible to choose another operator 
instead of $W_B$ with the same 
kernel such that condition (\ref{ranbed}) 
holds.
\item If $H$ is finite-dimensional, then $A{\H}$ has a compact resolvent, see  \cite[Theorem 2.3]{AJ}. However,
in general, $A{\H}$ does not have a compact resolvent. Take for example $N=1$, $P_1=1$, $P_0=0$, $H=\ell^2$, $\hat{W}_B=[  
I 
\,\,\, L ]$ and ${\H}(\zeta)=I_{\ell^2}$.  Here $L$ denotes the left shift on $H$, that is, $Le_i=e_{i+1}$. Thus, $A$ 
generates the  
left shift semigroup on $X=L^2([0,1];\ell^2)$,
which is isometric isomorph to the left shift on $X=L^2[0,\infty)$. However,  $0$ is a spectral point of $A$, but not in the point spectrum. 
\end{enumerate}
\end{remark}

As a corollary of Theorem \ref{theo1} we obtain the well-known contraction semigroup characterization for the case of a 
finite-dimensional Hilbert space $H$, see \cite{AJ}. However, we remark that Conditions 3 and 
4~are new even in the finite-dimensional situation.  

\begin{corollary} \label{theo2}
 Let $A$ be given by \eqref{operatorA}-\eqref{domainA} and assume that
$H$ is finite-dimensional. Then, assertions 1 to 5 in Theorem \ref{theo1} are equivalent, and, moreover, they are 
equivalent to
\begin{enumerate}
\item[6.]  $\Re P_0\le 0$, $W_B$ surjective and $W_B\Sigma W_B^*\ge 0$;
\item[7.]$ \Re P_0\le 0$, $W_B$ surjective  and there exists $V\in {\cal L}(H)$ with 
$\|V\|\le 1$ such that
$W_B= \frac{1}{2}(W_1+W_2)\begin{bmatrix} I+V &I-V\end{bmatrix}$.
\end{enumerate}
\end{corollary}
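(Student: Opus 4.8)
The plan is to deduce Corollary \ref{theo2} from Theorem \ref{theo1} by checking that, in the finite-dimensional setting, the range condition \eqref{ranbed} is automatic, and that surjectivity of $W_B$ can replace injectivity of $W_1+W_2$ in conditions 3 and 4. First I would observe that when $H$ is finite-dimensional, $H^N$ is finite-dimensional, so every linear operator between such spaces has closed range; in particular $\ran(W_1+W_2)$ is closed. This alone does not give \eqref{ranbed}, so the argument must instead go through the equivalence with dissipativity. The cleanest route is: since implications $1\Rightarrow 2$ and $2\Leftrightarrow 5$ hold without \eqref{ranbed} (Remark \ref{remark}(2)), and since $2$ always implies injectivity of $W_1+W_2$ (again Remark \ref{remark}(2)), I would first establish that in finite dimensions $2$ (equivalently $5$) also implies \eqref{ranbed}; then the full cycle of Theorem \ref{theo1} applies and $1$--$5$ are all equivalent.

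To see that dissipativity forces \eqref{ranbed} in finite dimensions, I would use the characterization already available: $A$ dissipative implies $W_1+W_2$ is injective, hence (finite dimensions, square operator on $H^N$) bijective, so $\ran(W_1+W_2)=H^N\supseteq\ran(W_1-W_2)$, which is exactly \eqref{ranbed}. Thus under assertion $2$ the hypothesis of Theorem \ref{theo1} is met, and $1\Leftrightarrow 2\Leftrightarrow 3\Leftrightarrow 4\Leftrightarrow 5$ follows. Conversely each of $3,4,5$ trivially implies $2$ via Theorem \ref{theo1}'s proof once \eqref{ranbed} is known, but more directly: $5\Rightarrow 2$ needs no range condition at all, and $3\Rightarrow 5$, $4\Rightarrow 3$ likewise. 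So the equivalence of $1$ through $5$ is secured without ever assuming \eqref{ranbed} a priori — it is a consequence in this setting.

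It remains to fold in conditions $6$ and $7$. Here the key point is the elementary linear-algebra fact that for a square operator $W_1+W_2$ on the finite-dimensional space $H^N$, injectivity is equivalent to surjectivity, which is equivalent to bijectivity. Given the factorization in condition $4$ (resp.\ $7$), $W_B=\tfrac12(W_1+W_2)[\,I+V\ \ I-V\,]$, the block row $[\,I+V\ \ I-V\,]:H^N\times H^N\to H^N$ is always surjective (it maps $(h,0)\mapsto (I+V)h$ and $(0,h)\mapsto(I-V)h$, and $\tfrac12((I+V)h+(I-V)h)=h$), so $W_B$ is surjective precisely when $W_1+W_2$ is surjective, i.e.\ precisely when $W_1+W_2$ is injective. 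Hence $4\Leftrightarrow 7$ and, reading off the $\Sigma$-condition unchanged, $3\Leftrightarrow 6$. Assembling these with the already-established equivalences $1\Leftrightarrow\cdots\Leftrightarrow 5$ completes the proof.

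I expect the only subtlety — it is minor — to be making sure the replacement of ``$W_1+W_2$ injective'' by ``$W_B$ surjective'' is done consistently in both $6$ and $7$ and that the $\Sigma$-positivity and the $V$-factorization are transported verbatim; the genuine content is entirely the injective/surjective dichotomy for square matrices plus Remark \ref{remark}(2), and there is no analytic obstacle at all in this corollary.
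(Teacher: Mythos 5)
Your reduction of assertions 1--5 to Theorem \ref{theo1} is essentially the paper's own argument: every one of the five conditions yields injectivity of $W_1+W_2$ (via Remark \ref{remark}.2 for 1, 2, 5; it is explicit in 3 and 4), and in finite dimensions injectivity of the square operator $W_1+W_2$ on $H^N$ gives surjectivity and hence \eqref{ranbed}, so the theorem applies. Likewise your treatment of $4\Leftrightarrow 7$ is sound: when the factorization $W_B=\tfrac12(W_1+W_2)\begin{bmatrix} I+V & I-V\end{bmatrix}$ is part of the hypothesis, surjectivity of $\begin{bmatrix} I+V & I-V\end{bmatrix}$ shows that $W_B$ is surjective exactly when $W_1+W_2$ is, i.e.\ exactly when it is injective. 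The implications $3\Rightarrow 6$ and $7\Rightarrow 6$ are also unproblematic, since $W_B\begin{psmallmatrix} y\\ y\end{psmallmatrix}=(W_1+W_2)y$ gives $\ran W_B\supseteq \ran(W_1+W_2)$.

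The genuine gap is the step ``reading off the $\Sigma$-condition unchanged, $3\Leftrightarrow 6$'', specifically the direction $6\Rightarrow 3$. Condition 6 contains neither the injectivity of $W_1+W_2$ nor any factorization of $W_B$, so the argument you used for $4\Leftrightarrow 7$ cannot be transported: from surjectivity of $W_B$ alone one cannot conclude surjectivity (or injectivity) of $W_1+W_2$, and without that your cycle never returns from 6 to any of 1--5, leaving 6 possibly strictly weaker. This is not a formality: the remark following the corollary exhibits $W_B=\begin{bmatrix} I-L & -I-L\end{bmatrix}$ on $\ell^2$, which is surjective with $W_B\Sigma W_B^*\ge 0$ while $W_1+W_2=-2L$ fails to be injective, so any proof of $6\Rightarrow 3$ must genuinely use both the positivity and finite-dimensionality. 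The paper supplies exactly this missing argument: $W_B\Sigma W_B^*\ge 0$ is equivalent to $W_1W_2^*+W_2W_1^*\ge 0$, whence $(W_1+W_2)(W_1+W_2)^*\ge (W_1-W_2)(W_1-W_2)^*\ge 0$; if $x\in\ker(W_1+W_2)^*$ this forces $(W_1-W_2)^*x=0$, hence $W_1^*x=W_2^*x=0$, i.e.\ $W_B^*x=0$, and surjectivity of $W_B$ (injectivity of $W_B^*$) gives $x=0$; in finite dimensions injectivity of $(W_1+W_2)^*$ then yields injectivity of $W_1+W_2$. Without this (or an equivalent) computation your proof of the corollary is incomplete.
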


\begin{remark} 
If $H$ is infinite-dimensional, then in general Conditions 6 and 7 of the previous corollary are not equivalent 
to the fact that $A{\cal H}$ generates a contraction semigroup. Let $H=\ell^2(\N)$, $N\in\mathbb N$, and $P_i$ and ${\cal H}$ are operators satisfying the general assumptions. First, we consider  $W_B=\begin{bmatrix} W_1 & W_2 \end{bmatrix}$ with $W_1:= \frac{3}{2} R$ and $W_2:= \frac{1}{2} R$, where $R$ 
denotes the right shift  on $\ell^2(\N)$. Then 
$\ran(W_1-W_2)= \ran(W_1+W_2)$, $W_1+W_2$ is injective and $W_B\Sigma W_B^*\ge 0$ but 
$W_B$ is not surjective. Thus, $A{\cal H}$ generates a contraction semigroup on  $(X,\langle \cdot,{\cal H} 
\cdot\rangle)$, but Conditions 6 and 7 are not satisfied.
Conversely, for the choice  $W_B=\begin{bmatrix} I-L & -I-L \end{bmatrix}$,  where $L$ 
denotes the left shift  on $\ell^2(\N)$, surjectivity of $W_B$ holds,
$\ran(W_1-W_2)\subseteq \ran(W_1+W_2)$ and $W_B\Sigma W_B^*\ge 0$, but $W_1+W_2$ is not 
injective. Thus, for these boundary conditions the Conditions 6 and 7 of the previous corollary are satisfied, but $A{\cal H}$ does not  generate a 
contraction semigroup on  $(X,\langle \cdot,{\cal H}\cdot\rangle)$.
\end{remark}

Next, we characterize the property of unitary group generation of $A\H$.

\begin{theorem}\label{theo3}
 Let $A$ be given by \eqref{operatorA}-\eqref{domainA}. Further 
assume 
 \begin{equation}\label{ranbed2}
 \ran\,(W_1-W_2)= \ran\,(W_1+W_2).
 \end{equation}
Then the  following statements are equivalent:
\begin{enumerate}
	\item $A\H$ with domain $\D(A{\cal H}):= \{x\in X\mid {\cal H}x\in \D(A)\}={\cal 
H}^{-1}\D(A)$ generates a unitary $C_0$-group on $(X,\langle \cdot,{\cal H} 
\cdot\rangle)$;
	\item $\Re \<Ax,x\>=0$ for every $x \in \D(A)$;
	\item $\Re P_0 = 0$, $ W_1+W_2$ and $- W_1+W_2$ are injective and $W_B\Sigma W_B^*= 0$;
	\item $\Re P_0 = 0$, $ W_1+W_2$ and $- W_1+W_2$ are injective and there exists 
$V\in 
{\cal L}(H)$ with $\|V\|= 1$ such that
$W_B= \frac{1}{2}(W_1+W_2)\begin{bmatrix} I+V &I-V\end{bmatrix}$;
	\item  $\Re P_0 = 0$ and $u^*Q u- y^*Q y= 0$  for every 
$\begin{psmallmatrix} u\\ v \end{psmallmatrix} \in \ker \hat { W}_B$.
\end{enumerate}
\end{theorem}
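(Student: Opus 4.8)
The plan is to reduce the unitary-group statement to the contraction-semigroup statement of Theorem \ref{theo1} by the standard device: $A\H$ generates a unitary $C_0$-group on $(X,\<\cdot,\H\cdot\>)$ if and only if both $A\H$ and $-A\H$ generate contraction semigroups there. For the operator $-A$ the roles of $P_k$ change sign for odd $k$ (in particular $P_1\mapsto -P_1$, while $P_0\mapsto -P_0$), which has the effect of replacing $Q$ by $-Q$; the boundary operator $\hat W_B$ is unchanged, so $W_B=[W_1\;W_2]$ gets replaced by $[\,{-W_1}\;W_2\,]$. Under the hypothesis \eqref{ranbed2} the range condition \eqref{ranbed} holds for \emph{both} $A$ and $-A$ (since $\ran(W_1-W_2)=\ran(W_1+W_2)=\ran(-W_1-W_2)=\ran(-W_1+W_2)$), so Theorem \ref{theo1} applies to each. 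Combining condition 1 of Theorem \ref{theo1} for $A\H$ and for $-A\H$ gives the equivalence $1\Leftrightarrow 2$ here (note $\Re\<Ax,x\>=0$ iff $\Re\<Ax,x\>\le 0$ and $\Re\<(-A)x,x\>\le 0$), and similarly for the other characterizations.

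Concretely, I would argue as follows. For $1\Leftrightarrow 2$: apply the equivalence $1\Leftrightarrow 2$ of Theorem \ref{theo1} to $A$ and to $-A$ and intersect. For $2\Leftrightarrow 5$: this is purely the algebraic equivalence $2\Leftrightarrow 5$ of Theorem \ref{theo1}, which by Remark \ref{remark}(2) needs no range hypothesis; applying it to $A$ gives $\Re P_0\le 0$ and $u^*Qu-y^*Qy\le 0$ on $\ker\hat W_B$, applying it to $-A$ gives $\Re(-P_0)\le 0$ and $u^*(-Q)u-y^*(-Q)y\le 0$ on $\ker\hat W_B$, and together these say exactly $\Re P_0=0$ and $u^*Qu-y^*Qy=0$ on $\ker\hat W_B$. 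For $2\Leftrightarrow 3$: apply $2\Leftrightarrow 3$ of Theorem \ref{theo1} to $A$, obtaining $\Re P_0\le 0$, $W_1+W_2$ injective, $W_B\Sigma W_B^*\ge 0$; applying it to $-A$, whose associated boundary operator is $[-W_1\;W_2]$, yields $\Re(-P_0)\le 0$, $-W_1+W_2$ injective, and $[-W_1\;W_2]\Sigma[-W_1\;W_2]^*\ge 0$, i.e. $-(W_1W_2^*+W_2W_1^*)\ge 0$; since $W_B\Sigma W_B^*=W_1W_2^*+W_2W_1^*$, the two sign conditions force $W_B\Sigma W_B^*=0$, giving condition 3. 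For $3\Leftrightarrow 4$: one checks the algebraic equivalence directly. The implication $4\Rightarrow 3$ (for $A$ and for $-A$, hence here) holds unconditionally by Remark \ref{remark}(2): writing $W_B=\tfrac12(W_1+W_2)[I+V\;\;I-V]$ one computes $W_B\Sigma W_B^*=\tfrac14(W_1+W_2)(2I-VV^*-VV^*\text{-type terms})(W_1+W_2)^*$; more precisely $W_B\Sigma W_B^*=\tfrac12(W_1+W_2)(I-VV^*)(W_1+W_2)^*$, up to the symmetrization, and this is $0$ exactly when $\|V\|=1$ makes the relevant factor vanish on the range, combined with injectivity of $-W_1+W_2=\tfrac12(W_1+W_2)\cdot 2V$ forcing $V$ injective; conversely, given condition 3, $W_B\Sigma W_B^*=0$ together with $\ran(W_1-W_2)=\ran(W_1+W_2)$ lets one solve $W_B=\tfrac12(W_1+W_2)[I+V\;\;I-V]$ for a contraction $V$ exactly as in the proof of $3\Rightarrow 4$ of Theorem \ref{theo1}, and then $W_B\Sigma W_B^*=0$ plus injectivity of $-W_1+W_2$ upgrades $\|V\|\le 1$ to $\|V\|=1$ and $V$ injective.

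The main obstacle is bookkeeping rather than conceptual: one must verify carefully that passing from $A$ to $-A$ really does replace the quadruple $(Q,\hat W_B,W_1,W_2)$ by $(-Q,\hat W_B,-W_1,W_2)$ — in particular that the matrix $\begin{bmatrix} Q & -Q\\ I & I\end{bmatrix}$ used to define $W_B$ from $\hat W_B$ becomes $\begin{bmatrix} -Q & Q\\ I & I\end{bmatrix}$, whose inverse conjugates $\hat W_B$ into $[-W_1\;W_2]$ — and that \eqref{ranbed2} is precisely the symmetric hypothesis making \eqref{ranbed} available on both sides. A secondary point requiring care is the upgrade from the inequalities ``$\|V\|\le 1$'' to the equality ``$\|V\|=1$'' in condition 4: the injectivity of $-W_1+W_2$ (equivalently, of $2\cdot\tfrac12(W_1+W_2)V$ after the normalization, hence of $V$ on an appropriate space) is what rules out $\|V\|<1$, and one should spell out that $W_B\Sigma W_B^*=0$ forces $V$ to be an isometry on the closure of $\ran\big((W_1+W_2)^*\big)$. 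Once these identifications are in place, every implication is a direct quotation of the corresponding implication in Theorem \ref{theo1} applied twice, so no genuinely new estimate is needed.
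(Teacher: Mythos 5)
Your proposal is correct and follows essentially the same route as the paper: the paper likewise reduces the statement to Theorem \ref{theo1} applied to $A$ and $-A$, identifies the boundary operator of $-A$ as $\begin{bmatrix}-W_1 & W_2\end{bmatrix}$ (via the boundary port variables $\bar f_{\partial,x}=-f_{\partial,x}$, $\bar e_{\partial,x}=e_{\partial,x}$, which is exactly your inverse-matrix bookkeeping), checks that \eqref{ranbed2} makes \eqref{ranbed} available for both operators, and obtains $1\Leftrightarrow2\Leftrightarrow5$ precisely as you do. The only difference is organisational: for conditions 3 and 4 the paper runs $1\Rightarrow4\Rightarrow3\Rightarrow1$ using the estimate \eqref{ab} from the proof of Theorem \ref{theo1} (which in fact yields $V^*V=I$, not merely $\|V\|=1$), whereas you intersect condition 3 of Theorem \ref{theo1} for $A$ and $-A$ and then treat $3\Leftrightarrow4$ by hand — your own caution that $\|V\|=1$ alone does not force $I-VV^*$ to vanish is well taken, but the paper's step $4\Rightarrow3$ ($W_B\Sigma W_B^*=S(2I-2VV^*)S^*=0$) is no more detailed on that point, so your sketch is at the same level of rigor as the published argument.
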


\begin{corollary}
 Let $A$ be given by \eqref{operatorA}-\eqref{domainA} and assume that
$H$ is finite-dimensional.  Then, assertions 1 to 5 in Theorem \ref{theo3} are equivalent, and, moreover, they are 
equivalent to
\begin{enumerate}
\item[6.]  $\Re P_0= 0$, $W_B$ surjective and $W_B\Sigma W_B^*= 0$;
\item[7.]$ \Re P_0=0$, $W_B$ surjective  and there exists $V\in {\cal L}(H)$ with 
$\|V\|= 1$ such that
$W_B= \frac{1}{2}(W_1+W_2)\begin{bmatrix} I+V &I-V\end{bmatrix}$.
\end{enumerate}
\end{corollary}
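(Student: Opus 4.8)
The plan is to reduce the statement to the finite-dimensional contraction case already settled in Corollary \ref{theo2}, using the elementary fact that $A{\cal H}$ generates a unitary $C_0$-group on $(X,\langle\cdot,{\cal H}\cdot\rangle)$ if and only if both $A{\cal H}$ and $-A{\cal H}$ generate contraction $C_0$-semigroups there. First I would note that $-A$ is again of the form \eqref{operatorA}--\eqref{domainA}: the coefficients $P_k$ get replaced by $-P_k$, which still satisfy the structural hypotheses, while the boundary operator $\hat W_B$, hence the domain, is unchanged. I would then track how the data entering Corollary \ref{theo2} transform under this substitution. The matrix $Q$ becomes $-Q$, and since $\begin{psmallmatrix} -Q & Q \\ I & I \end{psmallmatrix}^{-1}=\begin{psmallmatrix} Q & -Q \\ I & I \end{psmallmatrix}^{-1}\begin{psmallmatrix} -I & 0 \\ 0 & I \end{psmallmatrix}$, the pair $W_B=\begin{psmallmatrix} W_1 & W_2 \end{psmallmatrix}$ turns into $\begin{psmallmatrix} -W_1 & W_2 \end{psmallmatrix}$. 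Consequently $\Re P_0\mapsto-\Re P_0$, $W_1+W_2\mapsto -W_1+W_2$, $W_B\Sigma W_B^*\mapsto -W_B\Sigma W_B^*$ and $u^*Qu-y^*Qy\mapsto -(u^*Qu-y^*Qy)$, while $\begin{psmallmatrix} -W_1 & W_2 \end{psmallmatrix}$ is surjective exactly when $W_B$ is, and $\ker\hat W_B$ is unaffected.

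Applying the equivalences of Corollary \ref{theo2} simultaneously to $A$ and to $-A$ then yields the equivalences $1\Leftrightarrow 2\Leftrightarrow 5\Leftrightarrow 6\Leftrightarrow 7$ of the present corollary. For instance, $A{\cal H}$ and $-A{\cal H}$ both generate contraction semigroups (i.e.\ Condition~1 holds) if and only if $\Re P_0\le 0$ and $-\Re P_0\le 0$, both $W_B$ and $\begin{psmallmatrix} -W_1 & W_2 \end{psmallmatrix}$ are surjective, and both $W_B\Sigma W_B^*\ge 0$ and $-W_B\Sigma W_B^*\ge 0$, which is precisely Condition~6; Conditions~2 and 5 are read off in the same way from the ``$\le$''-type Conditions~2 and 5 of Corollary \ref{theo2}, and Condition~7 from its Condition~7 (the factorisations for $A$ and $-A$ then forcing the factor $V$ to be unitary).

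It remains to include Conditions~3 and 4, which carry the injectivity of $W_1+W_2$ and $-W_1+W_2$ rather than surjectivity of $W_B$. Here I would invoke Theorem \ref{theo3} directly, but that requires the range condition \eqref{ranbed2}, which I would now bootstrap from the conditions themselves. From the identities
$$W_B\Sigma W_B^*=\tfrac12\big((W_1+W_2)(W_1+W_2)^*-(W_1-W_2)(W_1-W_2)^*\big),\qquad W_BW_B^*=\tfrac12\big((W_1+W_2)(W_1+W_2)^*+(W_1-W_2)(W_1-W_2)^*\big),$$
Condition~6 forces $(W_1+W_2)(W_1+W_2)^*=(W_1-W_2)(W_1-W_2)^*=W_BW_B^*$, which is invertible since $W_B$ is surjective; hence $W_1\pm W_2$ are surjective, and being endomorphisms of the finite-dimensional space $H^N$ they are bijective, so $\ran(W_1-W_2)=H^N=\ran(W_1+W_2)$ and \eqref{ranbed2} holds. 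The same is immediate under Condition~3 or 4, where the injectivity of $W_1\pm W_2$ is assumed outright and, in finite dimensions, upgrades to bijectivity. Thus whenever one of Conditions~1--7 holds, \eqref{ranbed2} holds as well (if Condition~3 or 4 holds this was just seen; if any of 1, 2, 5, 6, 7 holds then Condition~6 holds by the previous paragraph). Consequently Theorem \ref{theo3} applies, giving $1\Leftrightarrow 2\Leftrightarrow 3\Leftrightarrow 4\Leftrightarrow 5$, and splicing this with the chain $1\Leftrightarrow 2\Leftrightarrow 5\Leftrightarrow 6\Leftrightarrow 7$ obtained above completes the proof.

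The main obstacle I anticipate is exactly this last bootstrapping of \eqref{ranbed2}: Theorem \ref{theo3} is stated under a range hypothesis that the corollary is meant to dispense with, and the content of the argument is that finite-dimensionality renders it automatic. The delicate point is that the reduction to Corollary \ref{theo2} naturally produces Condition~6 (surjectivity of $W_B$ together with $W_B\Sigma W_B^*=0$) and not the injectivity Conditions~3 and 4, so one has to argue separately---through the two displayed identities and the finite-dimensional coincidence of injectivity, surjectivity and bijectivity---that Conditions~3, 4 and 6 are all equivalent and all imply \eqref{ranbed2}. Everything else reduces to the book-keeping of the substitution of $-A$ for $A$ and the verification of the two identities above.
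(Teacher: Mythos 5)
Your strategy is sound and is essentially the proof the paper intends (the paper writes none out for this corollary): reduce to the contraction case via the fact that $A\H$ generates a unitary group iff both $\pm A\H$ generate contraction semigroups, track the substitution $P_k\mapsto -P_k$, which gives $Q\mapsto -Q$ and $\bar W_B=\begin{bmatrix}-W_1 & W_2\end{bmatrix}$ exactly as in \eqref{wminus}, apply Corollary \ref{theo2} to $A$ and $-A$, and separately check that in finite dimensions \eqref{ranbed2} is automatic under each of Conditions 1--7 so that Theorem \ref{theo3} can be invoked for Conditions 3 and 4. Your two displayed identities are correct, and the resulting bijectivity of $W_1\pm W_2$ is precisely the analogue of the step ``injective $\Rightarrow$ surjective $\Rightarrow$ \eqref{ranbed}'' in the paper's proof of Corollary \ref{theo2}.

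The one step that does not close is the equivalence with Condition 7 (and, through Theorem \ref{theo3}, with Condition 4). The conjunction of the factorization $W_B=\frac12(W_1+W_2)\begin{bmatrix}I+V& I-V\end{bmatrix}$, $\|V\|\le 1$, with the corresponding factorization of $\begin{bmatrix}-W_1& W_2\end{bmatrix}$ through $\frac12(-W_1+W_2)$ forces $\bar V=V^{-1}$ and hence $V$ unitary, i.e.\ $VV^*=I$, which is what yields $W_B\Sigma W_B^*=S(2I-2VV^*)S^*=0$ with $S=\frac12(W_1+W_2)$. That is strictly stronger than the stated requirement $\|V\|=1$: for $N=1$, $W_1+W_2=2I$, $V=\operatorname{diag}(1,\tfrac12)$ and $P_0=0$, Condition 7 holds ($W_B$ surjective, $\|V\|=1$), yet $W_B\Sigma W_B^*=2I-2VV^*=\operatorname{diag}(0,\tfrac32)\neq 0$, so Condition 6 --- and with it Condition 1 --- fails. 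Hence the implication $7\Rightarrow 1$ is not, and cannot be, established as stated; your parenthetical ``forcing the factor $V$ to be unitary'' is the correct conclusion, but it proves equivalence with ``$VV^*=I$'', not with ``$\|V\|=1$''. This defect is inherited from the paper itself (Condition 4 of Theorem \ref{theo3} and the step $4\Rightarrow 3$ there deduce $S(2I-2VV^*)S^*=0$ from $\|V\|=1$), so it is a flaw of the statement rather than of your method; with ``$\|V\|=1$'' read as ``$V$ unitary'' your argument is complete.
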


\subsection{Main results for $J=[0,\infty)$}

In this subsection, we choose $J=[0,\infty)$, $N=1$ and $H=\mathbb F^d$ with $\F=\R$ or $\F=\C$, that is, we consider 
the operator  $A\H$,   
\begin{align}\label{Aapp}
  A\H x&=P_0\H x+P_1\frac{\partial}{\partial \zeta}(\H x) \text{ with }\\ \label{DAapp}
\D( A\mathcal{H})&=\left\{x \in L^2([0,\infty);\F^d)\mid \H x\in \mathcal W^{1,2}([0,\infty);\F^d), \hat{W}_B(\H 
x(0))=0\right\}
\end{align}
on the space $X=L^2([0,\infty);\F^d)$.
Here  $P_1$ is an invertible Hermitian $d\times d$-matrix, $P_0\in \mathbb F^{d\times d}$, $\hat W_B\in  \mathbb F^{k\times 
d}$ with $k\in\{0,1,\cdots, d\}$ and
${\cal H}(\zeta)\in \mathbb F^{d\times d}$ is positive definite for a.e.~$\zeta\in [0,\infty)$  satisfying ${\cal H}, {\cal 
H}^{-1}\in L^{\infty}([0,\infty); \mathbb F^{d\times d})$. 
Since $P_1$ is an invertible Hermitian matrix, its eigenvalues are real and nonzero.

We denote by 
$n_1$ the number of positive and by $n_2=d-n_1$ the number of negative eigenvalues of $P_1$ 
and  write
\begin{equation}\label{eqndiagonalohneH}P_1=S^{-1}\Delta S=S^{-1}\begin{bmatrix}\Lambda & 0 \\ 0 & \Theta 
\end{bmatrix}S,\end{equation}
with a unitary matrix $S\in \mathbb F^{d\times d}$, a positive definite diagonal matrix $\Lambda \in  \mathbb 
R^{n_1\times 
n_1}$,  and a negative definite diagonal matrix $\Theta \in  \mathbb R^{n_2 
\times n_2}$. We define $\Delta=\begin{psmallmatrix} \Lambda &0 \\ 0 & \Theta\end{psmallmatrix}$.

\begin{theorem}\label{application} 
Assume $A\H$ is given by (\ref{Aapp})-(\ref{DAapp}), $\hat W_B\in  \mathbb F^{k\times d}$ with $k \leq n_2$ has full row 
rank. Then the following statements are equivalent:
\begin{enumerate}
\item $A{\cal H}$ generates a contraction semigroup on $(X,\langle \cdot,{\cal 
H};
\cdot\rangle)$;
\item $\Re\langle Ax,x\rangle\le 0$ for every $x\in \D(A)$;
\item  $\Re P_0\le 0$ and  $y^*P_1y \geq 0$  for every $y \in \ker \hat W_B$;
\item   $\Re P_0\le 0$, $k=n_2$ and $ \hat W_B=B\begin{bmatrix}U & 
I\end{bmatrix}S$, with  $B\in \F^{n_2\times n_2}$ invertible, $U\in \F^{n_2\times n_1}$ and $\Lambda+U^*\Theta U\geq 0$.
\end{enumerate}
\end{theorem}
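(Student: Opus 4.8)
\emph{Strategy.} The plan is to prove $1\Rightarrow2\Rightarrow3$, $3\Leftrightarrow4$, and then close the circle with $2\Rightarrow1$; the equivalence $3\Leftrightarrow4$ is pure linear algebra, the implications $1\Rightarrow2$ and $2\Rightarrow3$ rest on one integration by parts, and the analytic core is the range condition needed for $2\Rightarrow1$. First I would pass to the unweighted inner product: for $x\in\D(A\H)$ put $z:=\H x$; then $z$ ranges over $\D(A)$ as $x$ ranges over $\D(A\H)$, and $\<A\H x,x\>_{\H}=\<A\H x,\H x\>_{L^2}=\<Az,z\>_{L^2}$, so $A\H$ is dissipative on $(X,\<\cdot,\H\cdot\>)$ iff $A$ is dissipative on $(X,\<\cdot,\cdot\>)$; since a contraction semigroup generator is dissipative, $1\Rightarrow2$. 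Next, for $z\in\D(A)$ one has $z(\zeta)\to0$ as $\zeta\to\infty$ by the Sobolev embedding $\mathcal{W}^{1,2}([0,\infty);\F^d)\hookrightarrow C_0([0,\infty);\F^d)$, and since $P_1=P_1^*$, integrating $\frac{d}{d\zeta}\<P_1z(\zeta),z(\zeta)\>=2\Re\<P_1z'(\zeta),z(\zeta)\>$ over $[0,\infty)$ gives
$$2\Re\<Az,z\>=2\int_0^\infty\<(\Re P_0)z(\zeta),z(\zeta)\>\,d\zeta-\<P_1z(0),z(0)\>.$$
Because $z(0)\in\ker\hat W_B$ for $z\in\D(A)$, statement $3$ makes the right-hand side $\le0$, so $3\Rightarrow2$. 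Conversely, testing $2$ with $z_n=\psi_n v$, $\psi_n\in C_c^{\infty}$ vanishing near $0$ and equal to $1$ on $[1,n]$, forces $\Re P_0\le0$ as $n\to\infty$; testing with $z=\chi y$, $\chi\in C_c^{\infty}$, $\chi(0)=1$, $\operatorname{supp}\chi\subseteq[0,\eps]$, $y\in\ker\hat W_B$, forces $y^*P_1y\ge0$ as $\eps\to0$. Thus $2\Leftrightarrow3$; note that $3$ forces $\dim\ker\hat W_B\le n_1$ by the inertia of $P_1$, i.e.\ $k=n_2$.

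For $3\Leftrightarrow4$ I would diagonalise $P_1=S^{-1}\Delta S$. Writing $w:=Sy=(w_1,w_2)^T\in\F^{n_1}\times\F^{n_2}$ and $\hat W_BS^{-1}=:[\,G_1\ \ G_2\,]$ with $G_i$ of $k$ rows, one has $y\in\ker\hat W_B\Leftrightarrow G_1w_1+G_2w_2=0$, while $y^*P_1y=w_1^*\Lambda w_1+w_2^*\Theta w_2$. Setting $w_1=0$ and using $\Theta<0$ shows $\ker G_2=\{0\}$, hence $k=n_2$ and $G_2$ is invertible; with $B:=G_2$ and $U:=G_2^{-1}G_1\in\F^{n_2\times n_1}$ this gives $\hat W_B=B[\,U\ \ I\,]S$, $\ker\hat W_B=\{y:w_2=-Uw_1\}$, and $y^*P_1y=w_1^*(\Lambda+U^*\Theta U)w_1$, so $y^*P_1y\ge0$ on $\ker\hat W_B$ iff $\Lambda+U^*\Theta U\ge0$. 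Reversing these steps yields $4\Rightarrow3$.

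It remains to prove $2\Rightarrow1$. By the Lumer--Phillips theorem it suffices to show that $A\H$ is densely defined (immediate, as $\D(A)\supseteq C_c^{\infty}((0,\infty);\F^d)$) and that $\ran(\lambda_0-A\H)=X$ for some $\lambda_0>0$. For $f\in X$ and $z:=\H x$, the equation $(\lambda-A\H)x=f$ is equivalent to the linear system $z'=M_\lambda(\zeta)z-g$ on $[0,\infty)$, where $M_\lambda(\zeta):=P_1^{-1}(\lambda\H(\zeta)^{-1}-P_0)\in L^\infty$ and $g:=P_1^{-1}f\in L^2$, subject to $z\in\mathcal{W}^{1,2}$ and $\hat W_Bz(0)=0$. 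Since $\H,\H^{-1}\in L^\infty$ and $P_1$ is invertible Hermitian with inertia $(n_1,n_2)$, the Hermitian matrix $\H(\zeta)^{-1/2}P_1^{-1}\H(\zeta)^{-1/2}$ has $n_1$ eigenvalues $\ge\delta$ and $n_2$ eigenvalues $\le-\delta$, uniformly in $\zeta$, for some $\delta>0$; hence for $\lambda$ large $M_\lambda(\zeta)$ splits, uniformly in $\zeta$, into $n_1$ eigenvalues with large positive real part and $n_2$ with large negative real part, and $z'=M_\lambda z$ possesses an exponential dichotomy on $[0,\infty)$ with an $n_2$-dimensional stable subspace $\mathcal{S}$. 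Variation of parameters against the dichotomy projections produces a particular solution $z_p\in L^2$ (hence $z_p\in\mathcal{W}^{1,2}$, since $z_p'=M_\lambda z_p-g\in L^2$), and the general $L^2$-solution is $z_p+z_h$ with $z_h$ running over the stable solutions, so $z_h(0)$ runs over the $n_2$-dimensional fibre $\mathcal{S}(0)$. The boundary condition becomes $\hat W_Bz_h(0)=-\hat W_Bz_p(0)$ with $z_h(0)\in\mathcal{S}(0)$; since $2$ gives dissipativity of $A\H$, $\lambda-A\H$ is injective, so no nonzero stable solution lies in $\D(A\H)$, i.e.\ $\hat W_B|_{\mathcal{S}(0)}$ is injective, hence bijective because $\dim\mathcal{S}(0)=n_2=k$. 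Thus the boundary value can be matched, $\lambda-A\H$ is onto, and $A\H$ generates a contraction semigroup.

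The main obstacle is the construction of the exponential dichotomy for $z'=M_\lambda(\zeta)z$: the coefficient is only bounded and measurable in $\zeta$, so classical continuous-coefficient dichotomy theory does not apply verbatim, and one must get the $(n_1,n_2)$-splitting with decay constants that do not degenerate as $\zeta\to\infty$. I would handle this by writing the stable solutions as fixed points of an integral operator — integrating the strongly contracting modes forward from $\zeta=+\infty$ and the expanding modes forward from $0$ — and showing that, for $\lambda$ large, the off-diagonal coupling coming from the block structure of $P_1^{-1}$ is dominated by the spectral gap, so that a Banach fixed-point argument applies with constants depending only on $\norm{\H}_\infty$, $\norm{\H^{-1}}_\infty$, $\norm{P_0}$ and $\norm{P_1^{-1}}$; this uniformity is exactly where the hypotheses $\H,\H^{-1}\in L^\infty$ and $P_1=P_1^*$ invertible enter.
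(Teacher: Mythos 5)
Your steps $1\Rightarrow 2$, $2\Leftrightarrow 3$ and $3\Leftrightarrow 4$ coincide with the paper's argument: the transfer of dissipativity via $z=\mathcal{H}x$, the integration-by-parts identity $2\Re\langle Az,z\rangle=-z(0)^*P_1z(0)+2\Re\int_0^\infty z^*P_0z\,d\zeta$ tested with functions vanishing at $0$ and with functions concentrated near $0$ with prescribed $z(0)\in\ker\hat W_B$, and the diagonalisation of $P_1$ with the inertia count forcing $k=n_2$ and the invertibility of the $\Theta$-block are exactly what the paper does. The divergence, and the genuine gap, is the closing implication $2\Rightarrow 1$.

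The paper proves $4\Rightarrow 1$ by first invoking the multiplicative perturbation result \cite[Lemma 7.2.3]{JZ} to reduce to $\mathcal{H}=I$ (note that your substitution $z=\mathcal{H}x$ transfers dissipativity but \emph{not} the range condition, which is precisely why that lemma is needed), then removing $P_0$ as a bounded dissipative perturbation, and finally verifying $\ran(I-A)=X$ by solving the decoupled \emph{constant-coefficient} equations $x_1-\Lambda x_1'=y_1$ and $x_2-\Theta x_2'=y_2$ explicitly (Lemma \ref{posneg}); no dichotomy theory is ever needed. Your route keeps $\mathcal{H}(\zeta)$ inside the resolvent equation and therefore requires an exponential dichotomy for $z'=M_\lambda(\zeta)z$ with $M_\lambda=P_1^{-1}(\lambda\mathcal{H}^{-1}-P_0)$ only bounded and measurable, and your proposed construction does not close this. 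First, the claim that for large $\lambda$ the off-diagonal coupling is dominated by the spectral gap is not correct: in any fixed coordinates (e.g.\ the $S$-coordinates of \eqref{eqndiagonalohneH}) both the gap and the off-diagonal blocks of $\lambda P_1^{-1}\mathcal{H}(\zeta)^{-1}$ scale linearly in $\lambda$, so enlarging $\lambda$ does not improve their ratio. Second, since $\mathcal{H}$ is merely $L^\infty$, the $\zeta$-dependent stable/unstable splitting of $M_\lambda(\zeta)$ is not differentiable (not even continuous), so the usual reduction to block-diagonal form plus small remainder is unavailable, and pointwise inertia of $M_\lambda(\zeta)$ does not by itself yield a dichotomy for a non-autonomous system. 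The subsequent steps you describe (particular $L^2$ solution by variation of parameters, matching the boundary value on the $n_2$-dimensional stable fibre using injectivity of $\lambda-A\mathcal{H}$ and $k=n_2$) are sound \emph{given} the dichotomy, but they all hinge on it, so the analytic core of $2\Rightarrow 1$ is missing. The simplest repair is the paper's: prove $2\Rightarrow 3\Rightarrow 4$ as you did, then establish $4\Rightarrow 1$ by citing \cite[Lemma 7.2.3]{JZ} to pass to $\mathcal{H}=I$, $P_0=0$, and using the explicit solution formulas of Lemma \ref{posneg} for the range condition.
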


Further, we are able to characterize the property of unitary group generation in the case $J=[0,\infty)$.

\begin{theorem}\label{application2}
Let $A\H$ be given by (\ref{Aapp})-(\ref{DAapp}), $\hat W_B\in  \mathbb F^{k\times d}$ with $k\leq \min\{n_1,n_2\}$ has 
full row rank. Then the following 
statements are equivalent: 
\begin{enumerate}
\item $A{\cal H}$ generates a unitary $C_0$-group on $(X, \langle \cdot, {\cal H}\cdot\rangle)$;
\item $\Re\,\langle Ax,x \rangle= 0$ for every $x\in \D(A)$;
\item $\Re\,P_0= 0$  and $y^*P_1 y= 0$ for every  $y\in \ker \hat W_B$;
\item $k=n_1=n_2$, $\Re\,P_0= 0$  and $\hat W_B =\begin{bmatrix} U_1 & U_2 \end{bmatrix} S$,
where $U_1, U_2\in \mathbb F^{n_1\times n_1}$ are invertible with $\Lambda +U_1^*U_2^{-*}\Theta 
U_2^{-1} U_1= 0$. 
\end{enumerate}
\end{theorem}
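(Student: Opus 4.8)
The plan is to derive Theorem \ref{application2} from Theorem \ref{application} by running the latter simultaneously for $A$ and for $-A$, and then to settle the remaining linear-algebraic equivalence $3\Leftrightarrow4$ by diagonalising $P_1$ with $S$. The starting observation is that $-A$ is again of the form \eqref{Aapp}, with $P_0$ and $P_1$ replaced by $-P_0$ and $-P_1$; here $-P_1$ is still invertible and Hermitian, with exactly $n_1$ negative eigenvalues. Since the standing hypothesis is $k\le\min\{n_1,n_2\}$, Theorem \ref{application} applies to $A$ (via $k\le n_2$) and to $-A$ (via $k\le n_1$). I would then use the standard Hilbert-space fact that a densely defined operator $B$ generates a unitary $C_0$-group iff both $B$ and $-B$ generate contraction semigroups: if $B$ generates the unitary group $(U(t))_{t\in\R}$, then $(U(t))_{t\ge0}$ and $(U(-t))_{t\ge0}$ are contraction semigroups with generators $B$ and $-B$; conversely, if $B$ and $-B$ generate contraction semigroups $(T(t))_{t\ge0}$ and $(\widetilde T(t))_{t\ge0}$, then $T(t)\widetilde T(t)=\widetilde T(t)T(t)=I$, so each $T(t)$ is a bijective isometry, hence unitary, and $t\mapsto T(t)$ for $t\ge0$, $t\mapsto\widetilde T(-t)$ for $t<0$ is a unitary $C_0$-group. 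Applying this with $B=A\H$ on $(X,\langle\cdot,\H\cdot\rangle)$, and noting $-(A\H)=(-A)\H$ with the same domain $\D(A\H)$, statement $1$ is equivalent to ``$A\H$ and $(-A)\H$ both generate contraction semigroups''; by the equivalence $1\Leftrightarrow2$ of Theorem \ref{application} (for $A$ and for $-A$) this is equivalent to ``$A$ and $-A$ are both dissipative'', i.e.\ to $\Re\langle Ax,x\rangle=0$ for all $x\in\D(A)$. This proves $1\Leftrightarrow2$.

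For $2\Leftrightarrow3$ I would again invoke Theorem \ref{application}, now its equivalence $2\Leftrightarrow3$, for $A$ and for $-A$: dissipativity of $A$ means $\Re P_0\le0$ and $y^*P_1y\ge0$ for all $y\in\ker\hat W_B$, while dissipativity of $-A$ means $\Re P_0\ge0$ and $y^*P_1y\le0$ for all $y\in\ker\hat W_B$. Since $\Re\langle Ax,x\rangle=0$ for all $x$ is exactly ``$A$ and $-A$ both dissipative'', combining the two characterisations yields precisely statement $3$. (Alternatively one can argue directly: integration by parts, using that $\H x\in\mathcal W^{1,2}([0,\infty);\F^d)$ vanishes at $\infty$ and that $x(0)\in\ker\hat W_B$ for $x\in\D(A)$, gives $\Re\langle Ax,x\rangle=\int_0^\infty x(\zeta)^*(\Re P_0)x(\zeta)\,d\zeta-\tfrac12\,x(0)^*P_1x(0)$; varying $x$ freely in the interior forces $\Re P_0=0$, and then varying $x(0)$ over $\ker\hat W_B$ forces $y^*P_1y=0$ on $\ker\hat W_B$.)

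Finally $3\Leftrightarrow4$, where $\Re P_0=0$ is common to both sides so only the condition on $P_1$ and $\ker\hat W_B$ is at issue. With $y=S^{-1}w$, $w=(w_1,w_2)$, $w_1\in\F^{n_1}$, $w_2\in\F^{n_2}$, and $[\widetilde W_1\ \widetilde W_2]:=\hat W_B S^{-1}$ (partitioned accordingly), unitarity of $S$ gives $y^*P_1y=w_1^*\Lambda w_1+w_2^*\Theta w_2$ and $y\in\ker\hat W_B$ iff $\widetilde W_1w_1+\widetilde W_2w_2=0$. Assume this form vanishes on $\ker\hat W_B$. Taking $w_2=0$ and using $\Lambda>0$ forces $\widetilde W_1$ injective, hence (with $k\le n_1$) $k=n_1$ and $\widetilde W_1$ invertible; symmetrically, $w_1=0$ and $\Theta<0$ give $k=n_2$ and $\widetilde W_2$ invertible. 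Then $\ker\hat W_B$ corresponds under $S$ to the graph $\{w_1=Mw_2\}$ with $M:=-\widetilde W_1^{-1}\widetilde W_2$ invertible, and vanishing of the form is equivalent to $w_2^*(M^*\Lambda M+\Theta)w_2=0$ for all $w_2$, i.e.\ (the matrix being Hermitian) to $M^*\Lambda M+\Theta=0$. Writing $U_1=\widetilde W_1$, $U_2=\widetilde W_2$ (so $\hat W_B=[U_1\ U_2]S$ and $M=-U_1^{-1}U_2$), and multiplying $M^*\Lambda M+\Theta=0$ on the left by $U_1^*U_2^{-*}$ and on the right by $U_2^{-1}U_1$, turns it into $\Lambda+U_1^*U_2^{-*}\Theta U_2^{-1}U_1=0$, which is statement $4$; the converse implication just reverses these steps.

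The main obstacle is the direction $2\Rightarrow1$: one must produce a genuine unitary group, not merely dissipativity of $\pm A\H$, and this is exactly where the surjectivity already built into Theorem \ref{application} is used — and precisely why the hypothesis must be $k\le\min\{n_1,n_2\}$ rather than just $k\le n_2$, so that Theorem \ref{application} can be invoked for $-A$ as well. The remaining care is bookkeeping: checking that $(-A)\H=-(A\H)$ with the stated domain, and carrying out the ``isometry $+$ bijectivity $\Rightarrow$ unitary'' step with respect to the correct inner product $\langle\cdot,\H\cdot\rangle$ rather than $\langle\cdot,\cdot\rangle_{L^2}$. The linear algebra in $3\Leftrightarrow4$ is routine once one spots that the sign conditions force $\ker\hat W_B$ to be the graph of an invertible map from the $\Theta$-block to the $\Lambda$-block.
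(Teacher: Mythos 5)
Your proposal is correct. For the equivalence of statements 1, 2, 3 you follow essentially the paper's route: invoke the fact that a generator of a unitary $C_0$-group is exactly an operator such that both it and its negative generate contraction semigroups (the paper cites \cite[Theorem 6.2.5]{JZ}, you reprove it), and then apply Theorem \ref{application} to $A$ and to $-A$, correctly observing that the hypothesis $k\le\min\{n_1,n_2\}$ is what makes the theorem applicable to $-A$ (whose $P_1$-matrix has $n_1$ negative eigenvalues). Where you genuinely diverge is the step involving statement 4: the paper obtains it by writing out assertion 4 of Theorem \ref{application} for both $A$ and $-A$, i.e.\ by matching the two factorizations $\hat W_B=B\begin{bmatrix}U & I\end{bmatrix}S=\bar B\begin{bmatrix}I & \bar U\end{bmatrix}S$, deducing $\bar U=U^{-1}$ and combining the inequalities $\Lambda+U^*\Theta U\ge 0$ and $\Theta+\bar U^*\Lambda\bar U\le 0$ into the equality, with $U_1=BU$, $U_2=B$. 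You instead prove $3\Leftrightarrow 4$ directly: diagonalise $P_1$ by $S$, show that vanishing of the form $w_1^*\Lambda w_1+w_2^*\Theta w_2$ on $\ker\hat W_BS^{-1}$ forces both blocks of $\hat W_BS^{-1}$ to be injective, hence square and invertible (so $k=n_1=n_2$), identify the kernel as the graph of $M=-U_1^{-1}U_2$, and convert $M^*\Lambda M+\Theta=0$ into $\Lambda+U_1^*U_2^{-*}\Theta U_2^{-1}U_1=0$ by congruence. Your version is more self-contained and avoids the bookkeeping of restating assertion 4 of Theorem \ref{application} for $-A$ with the blocks reordered, at the cost of redoing a small amount of linear algebra that the paper recycles from the proof of Theorem \ref{application}; both arguments are complete and correct (note only that in the real case the conclusion $M^*\Lambda M+\Theta=0$ from the vanishing quadratic form indeed uses the symmetry of that matrix, as you remark).
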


\section{Proofs of the main results: $J=[0,1]$}\label{sec:main1}
Throughout this section we will assume that $J=[0,1]$,  $X=L^2((0,1);H)$, $A$ is given by \eqref{operatorA}-\eqref{domainA}, 
and $W_B$ and $\Sigma$ are defined as in Section \ref{sec:main}.  In order to prove the main statements it is convenient to 
introduce the following linear combinations of the boundary values \cite{GZM}.

\begin{definition} For $x\in {\H}^ {-1}{\mathcal W}^{N,2}((0,1);H)$ we define so called  boundary port variables, namely 
\emph{boundary flow} and \emph{boundary effort}, by
\begin{align} \label{bpv}
\begin{bmatrix}  f_{\partial,\H x} \\ e_{\partial,\H x} \end{bmatrix} 
&:=\frac{1}{\sqrt{2}}\begin{bmatrix}   Q & - Q \\  I&  I \end{bmatrix}\Phi(\H x) = 
R_{ext}\Phi(\H x),
\end{align}
where $Q$ is defined by \eqref{defQ} and $ R_{ext}:=\frac{1}{\sqrt{2}}\left[\begin{smallmatrix}  Q & 
-Q \\ I&  I \end{smallmatrix}\right] \in \BL(H^{2N})$.
\end{definition}

\begin{remark} 
Thanks to the invertibility of $Q$, the operator $R_{ext}$ is invertible.
Thus, we can use the 
boundary port variables to reformulate the domain of the operator $A{\H}$:
\begin{align*}
\D(A{\H}) &= \left\{ x\in X\mid  {\H}x\in {\mathcal W}^{N,2}((0,1);H)\mbox{ and } \hat{W}_B\Phi(\H x) = 0 
\right\}\\&=\left\{ x\in X\mid {\H}x\in  {\mathcal W}^{N,2}((0,1);H) \mbox{ and }  {W}_B \begin{bmatrix}  
f_{\partial,\H x} \\ e_{\partial,\H x} \end{bmatrix}= 0 \right\},
\end{align*}
where $W_B=\hat{W}_{B} R^{-1}_{ext}$.
\end{remark}

Next, we determine the adjoint operator  of $A$. We define $\tilde{ 
Q}=- Q$ and 
\begin{align*}\begin{bmatrix}  \tilde{ f}_{\partial,\H x} \\ \tilde{ e}_{\partial,\H x} 
\end{bmatrix}=\tilde{ R}_{ext}\Phi(\H x) 
\text{ with }\tilde{ R}_{ext}=\frac{1}{\sqrt{2}}\begin{bmatrix}\tilde{ Q} & -\tilde{ Q} \\ 
\ I &  I \end{bmatrix}. \end{align*} 

\begin{lemma} \label{aad2}
The adjoint operator  of the operator $A$ defined in (\ref{operatorA}) with domain 
(\ref{domainA})
and a boundary operator $W_B$ of the form $W_B=S\begin{bmatrix} I+ V &  I- V 
\end{bmatrix}$ where $ S, V \in \BL(H^N)$ and $S$ is injective, is given by
\begin{align} \label{dadt2}
 A^*y&= P_0^*y-\sum_{k=1}^N  P_k \frac{d^k}{d\zeta^k}y, \quad y\in \D(A^*),\\
 \label{dad2t2}
\D( A^*)&=\left\{y \in {\mathcal W}^{N,2}((0,1);H): S\begin{bmatrix}  I+ V^*& I-V^*\end{bmatrix} \begin{bmatrix}  \tilde{ 
f}_{\partial,y} \\ 
\tilde{ e}_{\partial,y} \end{bmatrix}=0 \right\}.
\end{align}
\end{lemma}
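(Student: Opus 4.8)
The plan is to compute the adjoint by the standard recipe: first pin down the formal differential expression via integration by parts, then translate the ``no boundary term'' requirement into a condition on boundary port variables, and finally massage that condition into the stated closed form using the special factored shape of $W_B$. First I would take $x\in\D(A)$ and $y\in\mathcal W^{N,2}((0,1);H)$ and integrate by parts $k$ times in each summand $\<P_k\frac{d^k x}{d\zeta^k},y\>$. Using the self-adjointness relations $P_k^*=(-1)^{k+1}P_k$ for $k\ge 1$, the volume terms collect into $\<x,P_0^*y-\sum_{k=1}^N P_k\frac{d^k}{d\zeta^k}y\>$, which already identifies the formal expression \eqref{dadt2}; this is a routine bookkeeping computation. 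The boundary terms from all the integrations by parts assemble into a bilinear pairing of $\Phi(x)$ with $\Phi(y)$, and the key algebraic fact (essentially the ``Green's identity'' behind the definition of $Q$ in \eqref{defQ}) is that this boundary pairing equals $\<\Phi(x),\mathrm{diag}(Q,-Q)\,\Phi(y)\>_{H^{2N}}$, i.e.\ $\<f_{\partial,x},\tilde e_{\partial,y}\>+\<e_{\partial,x},\tilde f_{\partial,y}\>$ once one rewrites everything through $R_{ext}$ and $\tilde R_{ext}$. One checks here that $R_{ext}^{*}\,\Sigma\,\tilde R_{ext}=\mathrm{diag}(Q,-Q)$ or an equivalent identity, so the boundary form is exactly $\bigl\<R_{ext}\Phi(x),\Sigma\,\tilde R_{ext}\Phi(y)\bigr\>$.

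Given this, $y\in\D(A^*)$ iff the boundary form vanishes for all $x\in\D(A)$, i.e.\ for all $\left[\begin{smallmatrix}f\\e\end{smallmatrix}\right]\in\ker W_B$ one needs $\Sigma\left[\begin{smallmatrix}\tilde f_{\partial,y}\\\tilde e_{\partial,y}\end{smallmatrix}\right]\perp\left[\begin{smallmatrix}f\\e\end{smallmatrix}\right]$, equivalently $\Sigma\left[\begin{smallmatrix}\tilde f_{\partial,y}\\\tilde e_{\partial,y}\end{smallmatrix}\right]\in(\ker W_B)^{\perp}=\overline{\ran W_B^{*}}$. Now I would use the hypothesis $W_B=S\begin{bmatrix}I+V & I-V\end{bmatrix}$: since $S$ is injective, $\ker W_B=\ker\begin{bmatrix}I+V & I-V\end{bmatrix}=\{\left[\begin{smallmatrix}f\\e\end{smallmatrix}\right]:(I+V)f+(I-V)e=0\}$, so the condition on $y$ is that $\begin{bmatrix}I+V & I-V\end{bmatrix}\Sigma\left[\begin{smallmatrix}\tilde f_{\partial,y}\\\tilde e_{\partial,y}\end{smallmatrix}\right]=0$, that is, $(I+V)\tilde e_{\partial,y}+(I-V)\tilde f_{\partial,y}=0$. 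Rearranging, $(\tilde f_{\partial,y}+\tilde e_{\partial,y})+V(\tilde e_{\partial,y}-\tilde f_{\partial,y})=0$, and applying $S$ (which does not change the solution set, $S$ being injective) and regrouping gives $S\begin{bmatrix}I+V^{*} & I-V^{*}\end{bmatrix}$ acting on the port variables — here I must be slightly careful with whether $V$ or $V^*$ appears, and the correct normalization comes precisely from the adjoint pairing $\<x,A^*y\>=\overline{\<A^*y,x\>}$, which transposes the roles of the two boundary sides and produces $V^{*}$ rather than $V$. Working this out carefully yields \eqref{dad2t2}.

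The main obstacle is the second step: verifying cleanly that the accumulated boundary terms from the $N$-fold integrations by parts reorganize into the compact form $\<R_{ext}\Phi(x),\Sigma\tilde R_{ext}\Phi(y)\>$ with $\tilde Q=-Q$. This is the combinatorial heart — one has to track the alternating signs $(-1)^{i-1}$ and the shifted indices $P_{i+j-1}$ in \eqref{defQ} and confirm they match the boundary contributions at $\zeta=1$ and $\zeta=0$ with the right relative sign (the $\Sigma$ that swaps flow and effort, and the sign flip $\tilde Q=-Q$ reflecting that $A^*$ carries $-\sum P_k\frac{d^k}{d\zeta^k}$). A secondary technical point is justifying that $\D(A^*)$ is exactly the set described and not merely contained in it: one needs that for every $y$ satisfying the boundary condition there is genuinely no obstruction, i.e.\ the candidate operator is symmetric-adjoint on the nose, which follows because $R_{ext}$ (hence the map $y\mapsto\left[\begin{smallmatrix}\tilde f_{\partial,y}\\\tilde e_{\partial,y}\end{smallmatrix}\right]$ together with the remaining boundary data) is surjective onto $H^{2N}$, so the pairing is non-degenerate and the perpendicularity characterization is sharp. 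The rest — the formal expression, and the algebra turning $(\ker W_B)^\perp$ into the factored boundary operator for $A^*$ — is then straightforward linear algebra.
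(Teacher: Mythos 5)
Your overall strategy (integration by parts to identify the formal adjoint, a Green identity in the boundary port variables, then translating the vanishing of the boundary form via the factorization $W_B=S\begin{bmatrix}I+V & I-V\end{bmatrix}$) is the standard route; the paper itself gives no proof of Lemma \ref{aad2} but defers to Proposition 3.4.3 in Augner's thesis, which proceeds along these lines in the finite-dimensional case. However, your sketch has genuine gaps at exactly the points that produce the stated formula. First, the claimed identity $R_{ext}^{*}\Sigma\tilde R_{ext}=\operatorname{diag}(Q,-Q)$ is false: a direct computation (using $Q^*=Q$, $\tilde Q=-Q$) gives $R_{ext}^{*}\Sigma\tilde R_{ext}=\bigl[\begin{smallmatrix}0 & Q\\ -Q & 0\end{smallmatrix}\bigr]$, whereas the correct identity is $R_{ext}^{*}\Sigma R_{ext}=\operatorname{diag}(Q,-Q)$. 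Consequently the boundary form is $\langle f_{\partial,x},e_{\partial,y}\rangle+\langle e_{\partial,x},f_{\partial,y}\rangle=\langle f_{\partial,x},\tilde e_{\partial,y}\rangle-\langle e_{\partial,x},\tilde f_{\partial,y}\rangle$, not the sum with tilded variables you wrote; carried through literally, your sign convention would land you on the wrong boundary condition (effectively the untilded one). Second, and more seriously, the decisive step confuses the orthogonal complement of $\ker W_B$ with $\ker\begin{bmatrix}I+V & I-V\end{bmatrix}$ itself: from ``the boundary form vanishes for all traces in $\ker W_B$'' you cannot conclude $\begin{bmatrix}I+V & I-V\end{bmatrix}\Sigma\bigl[\begin{smallmatrix}\tilde f_{\partial,y}\\ \tilde e_{\partial,y}\end{smallmatrix}\bigr]=0$. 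The correct argument uses Lemma \ref{teclemV}: $\ker\begin{bmatrix}I+V & I-V\end{bmatrix}=\ran\bigl[\begin{smallmatrix}I-V\\ -I-V\end{smallmatrix}\bigr]$, and orthogonality to this range means membership in the kernel of its adjoint $\begin{bmatrix}I-V^{*} & -I-V^{*}\end{bmatrix}$; unwinding with $\tilde f_{\partial,y}=-f_{\partial,y}$, $\tilde e_{\partial,y}=e_{\partial,y}$ yields $(I+V^{*})\tilde f_{\partial,y}+(I-V^{*})\tilde e_{\partial,y}=0$. That is where $V^{*}$ comes from — not, as you suggest, from the conjugation in $\langle x,A^{*}y\rangle=\overline{\langle A^{*}y,x\rangle}$ — and since you leave this as ``working it out carefully yields \eqref{dad2t2}'', the statement is not actually derived.

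A further missing ingredient is the regularity inclusion $\D(A^{*})\subseteq\mathcal W^{N,2}((0,1);H)$. Your plan starts from $y\in\mathcal W^{N,2}$, so at best it characterizes $\D(A^{*})\cap\mathcal W^{N,2}$; to prove the equality asserted in the lemma you must show that every $y\in\D(A^{*})$ (a priori only in $L^{2}$) has $N$ weak derivatives in $L^{2}$. This is done by testing against $x\in\mathcal C_c^{\infty}((0,1);H)\subseteq\D(A)$ to identify $\sum_{k\ge1}P_k\,d^k y/d\zeta^k$ distributionally with an $L^{2}$ function and then using the invertibility of $P_N$ to bootstrap $y\in\mathcal W^{N,2}$; this step is the nontrivial part of the Augner-type proof and is absent from your sketch. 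The point you do flag (sharpness of the orthogonality characterization) is handled by the trace surjectivity of Lemma \ref{tl3} together with the invertibility of $R_{ext}$, which you essentially have; but the three items above — the sign in the Green identity, the kernel-versus-orthocomplement step that genuinely produces $V^{*}$, and the adjoint-domain regularity — need to be fixed or supplied before this counts as a proof.
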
 
\begin{proof} The statement can be proved in a similar manner as Proposition 3.4.3 in 
\cite{Augner}, where the statement is shown for finite-dimensional Hilbert spaces $H$.
\end{proof}

\begin{definition}\label{A0}
We define the operators $A_0:\D(A_0)\subseteq X \rightarrow X$ and $(A^*)_0:\D((A^*)_0)\subseteq X \rightarrow X$ by
\begin{align*}
A_0x&:= \sum_{k=0}^N P_k
  \frac{\partial^k}{\partial \zeta^k} x, \quad (A^*)_0y:= P_0^*y-\sum_{k=1}^N  P_k \frac{d^k}{d\zeta^k}y\\
\D(A_0) &=\D(A^*_0)=  {\mathcal W}^{N,2}((0,1);H).
\end{align*}
\end{definition}

Remark, that $A_0$ and $(A^*)_0$ are extensions of $A$ and $A^*$, respectively. Integration by parts yields the following lemma.

\begin{lemma} \label{regl} We have for $x \in  {\mathcal W}^{N,2}((0,1);H)$
\begin{align*}
\Re \langle A_0 x,x\rangle  &=\Re \langle  f_{\partial, x}, e_{\partial, x} 
\rangle_{H^{N}}+\Re \langle P_0x,x\rangle\\
&= \Phi_1(x)^*Q\Phi_1(x)-\Phi_0(x)^*Q\Phi_0(x)+\Re \langle P_0x,x\rangle, \\
\Re \langle (A^*)_0 x,x\rangle  &=\Re \langle  \tilde f_{\partial, x}, 
\tilde e_{\partial, x} 
\rangle_{H^{N}}+\Re \langle P_0x,x\rangle\\
&= \Phi_1(x)^*\tilde Q\Phi_1(x)-\Phi_0(x)^*\tilde Q\Phi_0(x)+\Re \langle P_0x,x\rangle.
\end{align*}
\end{lemma}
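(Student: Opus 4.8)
The plan is to prove Lemma~\ref{regl} by a direct integration-by-parts computation, reducing the general $N$-th order case to the pattern encoded by the matrix $Q$. First I would fix $x\in{\mathcal W}^{N,2}((0,1);H)$ and write $\Re\langle A_0x,x\rangle = \Re\langle P_0x,x\rangle + \sum_{k=1}^N\Re\langle P_k\frac{\partial^k x}{\partial\zeta^k},x\rangle$, so that the zeroth-order term splits off immediately and it remains to treat the higher-order terms. For each fixed $k\ge 1$, I would integrate by parts $k$ times to move all derivatives off $x$ in the first slot; using the relation $P_k^*=(-1)^{k+1}P_k$, the ``bulk'' term $\int_0^1\langle P_k\frac{\partial^k x}{\partial\zeta^k},x\rangle\,d\zeta$ transforms into $(-1)^k\int_0^1\langle P_k x,\frac{\partial^k x}{\partial\zeta^k}\rangle\,d\zeta = (-1)^k\overline{(-1)^{k+1}}\int_0^1\langle P_k\frac{\partial^k x}{\partial\zeta^k},x\rangle\,d\zeta = -\int_0^1\langle P_k\frac{\partial^k x}{\partial\zeta^k},x\rangle\,d\zeta$ (after taking conjugates appropriately), so that twice the real part of the bulk integral equals the accumulated boundary terms evaluated at $\zeta=1$ minus those at $\zeta=0$.

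Next I would collect the boundary contributions. Integrating $\langle P_k\frac{\partial^k x}{\partial\zeta^k},x\rangle$ by parts produces, at each endpoint $i\in\{0,1\}$, a sum $\sum_{j=0}^{k-1}(-1)^{j}\langle P_k\frac{d^{k-1-j}x}{d\zeta^{k-1-j}}(i),\frac{d^{j}x}{d\zeta^{j}}(i)\rangle$. Reindexing and summing over $k=1,\dots,N$, these boundary terms assemble into the Hermitian form $\Phi_i(x)^*Q\Phi_i(x)$ with exactly the sign pattern $(-1)^{p-1}P_{p+q-1}$ appearing in \eqref{defQ}: the entry in block-position $(p,q)$ multiplies $\frac{d^{p-1}x}{d\zeta^{p-1}}(i)$ against $\frac{d^{q-1}x}{d\zeta^{q-1}}(i)$ through $P_{p+q-1}$ whenever $p+q-1\le N$, and vanishes otherwise. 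This is the bookkeeping that identifies the boundary quadratic form with $Q$; it is essentially the content of the standard port-Hamiltonian computation (e.g.\ as in \cite{GZM,JZ}), and I would carry it out carefully but without belaboring the index shuffling. Selfadjointness of $Q$, already noted in the excerpt, is consistent with $\Phi_i(x)^*Q\Phi_i(x)$ being real.

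Having established $\Re\langle A_0x,x\rangle = \Phi_1(x)^*Q\Phi_1(x)-\Phi_0(x)^*Q\Phi_0(x)+\Re\langle P_0x,x\rangle$, the first displayed formula follows by recalling the definition \eqref{bpv}: since $\begin{bmatrix}f_{\partial,x}\\ e_{\partial,x}\end{bmatrix}=\frac{1}{\sqrt2}\begin{bmatrix}Q&-Q\\ I&I\end{bmatrix}\begin{bmatrix}\Phi_1(x)\\ \Phi_0(x)\end{bmatrix}$, a short computation gives $\Re\langle f_{\partial,x},e_{\partial,x}\rangle_{H^N}=\frac12\Re\big(\langle Q\Phi_1(x)-Q\Phi_0(x),\Phi_1(x)+\Phi_0(x)\rangle\big)=\Phi_1(x)^*Q\Phi_1(x)-\Phi_0(x)^*Q\Phi_0(x)$, where the cross terms cancel because $Q=Q^*$ forces $\langle Q\Phi_1(x),\Phi_0(x)\rangle-\langle Q\Phi_0(x),\Phi_1(x)\rangle$ to be purely imaginary. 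Finally, the second pair of identities for $(A^*)_0$ is obtained verbatim by the same argument with $P_k$ replaced by $-P_k$ for $k\ge1$ (hence $Q$ replaced by $\tilde Q=-Q$) and $P_0$ by $P_0^*$, noting $\Re\langle P_0^*x,x\rangle=\Re\langle P_0x,x\rangle$.

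The main obstacle is purely combinatorial: getting the signs and the ranges of the double summation over $k$ and the integration-by-parts index $j$ to line up exactly with the definition of $Q$ in \eqref{defQ}, including the truncation $i+j\le N+1$. There is no analytic difficulty — boundary traces of ${\mathcal W}^{N,2}((0,1);H)$ functions up to order $N-1$ are well-defined by the Sobolev embedding, and the integration by parts is justified for $x\in{\mathcal W}^{N,2}$ by density of smooth functions — so the entire content is verifying the bookkeeping, which is why I would present the $k$-th term computation once in full and then state the reindexing that produces the block Hankel-type matrix $Q$.
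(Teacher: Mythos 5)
Your route is the same as the paper's: the paper offers nothing beyond the remark that integration by parts yields the lemma, and your sketch fills in exactly that computation — repeated integration by parts using $P_k^*=(-1)^{k+1}P_k$, reassembly of the boundary terms into the block pattern of \eqref{defQ}, passage to the port variables via \eqref{bpv}, and the reduction of the $(A^*)_0$ case to the same argument with $P_k\mapsto -P_k$ for $k\ge1$ and $P_0\mapsto P_0^*$. The index bookkeeping is right: for fixed $k$ the endpoint contribution is $\sum_{j=0}^{k-1}(-1)^{j}\langle P_k x^{(k-1-j)}(i),x^{(j)}(i)\rangle$, and summing over $k$ with $p=j+1$, $q=k-j$ reproduces precisely the entries $(-1)^{p-1}P_{p+q-1}$, $p+q\le N+1$, of $Q$.

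There is, however, a factor-of-two inconsistency inside your write-up that you should resolve. Your second paragraph correctly yields \emph{twice} the real part of the bulk term, i.e. $2\,\Re\langle (A_0-P_0)x,x\rangle=\Phi_1(x)^*Q\Phi_1(x)-\Phi_0(x)^*Q\Phi_0(x)$, and the cross-term computation with \eqref{bpv} gives $\Re\langle f_{\partial,x},e_{\partial,x}\rangle_{H^N}=\frac12\bigl(\Phi_1(x)^*Q\Phi_1(x)-\Phi_0(x)^*Q\Phi_0(x)\bigr)$, not that quantity without the $\frac12$ as you assert; so the step ``$\frac12\Re\langle Q\Phi_1(x)-Q\Phi_0(x),\Phi_1(x)+\Phi_0(x)\rangle=\Phi_1(x)^*Q\Phi_1(x)-\Phi_0(x)^*Q\Phi_0(x)$'' is off by a factor of $2$ and contradicts what you derived just before. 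A quick check with $N=1$, $H=\mathbb C$, $P_1=1$, $P_0=0$: both $\Re\int_0^1 x'\overline{x}\,d\zeta$ and $\Re\langle f_{\partial,x},e_{\partial,x}\rangle$ equal $\frac12\bigl(|x(1)|^2-|x(0)|^2\bigr)$, whereas $\Phi_1(x)^*Q\Phi_1(x)-\Phi_0(x)^*Q\Phi_0(x)=|x(1)|^2-|x(0)|^2$. So the identity $\Re\langle A_0x,x\rangle=\Re\langle f_{\partial,x},e_{\partial,x}\rangle_{H^N}+\Re\langle P_0x,x\rangle$ is correct and your argument proves it, but the quadratic-form expression must carry the factor $\frac12$; the same normalization slip is present in the lemma as printed (it is harmless for the later sign arguments, which only use nonpositivity), and your proof should state the consistent version rather than reproducing two incompatible normalizations in one chain of equalities.
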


Furthermore, we need some technical results. First, we give a generalization of the 
technical Lemma 7.3.2 in \cite{JZ} for $N\geq 1$ and arbitrary Banach spaces $Z$.

\begin{lemma}\label{teclemV}
Let $Z$ be a Banach space and $V \in \BL(Z)$. Then it holds
$$\ker \begin{bmatrix} I+ V &  I- V \end{bmatrix}=\ran \begin{bmatrix} I- V \\ - I- V \end{bmatrix},$$
where $\begin{bmatrix} I+ V &  I- V \end{bmatrix} \in \BL(Z\times Z, Z)$ and $\begin{psmallmatrix}I- V \\ - I- V \end{psmallmatrix} \in \BL(Z, Z\times Z)$.
\end{lemma}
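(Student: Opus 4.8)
\textbf{Proof plan for Lemma~\ref{teclemV}.}
The plan is to prove the two set inclusions separately, working entirely with the bounded operators
$\begin{bmatrix} I+ V &  I- V \end{bmatrix}$ and $\begin{psmallmatrix}I- V \\ - I- V \end{psmallmatrix}$ and
exploiting the algebraic identity that composing them gives zero. First I would check the composition
$\begin{bmatrix} I+ V &  I- V \end{bmatrix}\begin{psmallmatrix}I- V \\ - I- V \end{psmallmatrix}
=(I+V)(I-V)+(I-V)(-I-V)=(I-V^2)-(I-V^2)=0$,
which immediately gives the inclusion $\ran \begin{psmallmatrix}I- V \\ - I- V \end{psmallmatrix}\subseteq
\ker \begin{bmatrix} I+ V &  I- V \end{bmatrix}$. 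Note this direction uses nothing beyond boundedness and the
fact that $V$ commutes with itself, so it is valid for an arbitrary Banach space $Z$.

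For the reverse inclusion, take $\begin{psmallmatrix} z_1 \\ z_2 \end{psmallmatrix}\in
\ker \begin{bmatrix} I+ V &  I- V \end{bmatrix}$, i.e. $(I+V)z_1+(I-V)z_2=0$. I want to produce $z\in Z$ with
$z_1=(I-V)z$ and $z_2=-(I+V)z$. The natural candidate is $z:=\tfrac12(z_1-z_2)$: then
$(I-V)z=\tfrac12(z_1-z_2)-\tfrac12 V(z_1-z_2)$ and $-(I+V)z=-\tfrac12(z_1-z_2)-\tfrac12 V(z_1-z_2)$. Adding the
kernel relation $(I+V)z_1+(I-V)z_2=0$, i.e. $z_1+z_2 = -V z_1 + V z_2 = -V(z_1-z_2)$, one finds
$z_1=\tfrac12(z_1+z_2)+\tfrac12(z_1-z_2)=-\tfrac12 V(z_1-z_2)+\tfrac12(z_1-z_2)=(I-V)z$, and similarly
$z_2=\tfrac12(z_1+z_2)-\tfrac12(z_1-z_2)=-\tfrac12 V(z_1-z_2)-\tfrac12(z_1-z_2)=-(I+V)z$. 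Hence
$\begin{psmallmatrix} z_1 \\ z_2 \end{psmallmatrix}=\begin{psmallmatrix}I- V \\ - I- V \end{psmallmatrix}z$,
proving $\ker \begin{bmatrix} I+ V &  I- V \end{bmatrix}\subseteq \ran \begin{psmallmatrix}I- V \\ - I- V \end{psmallmatrix}$.

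Combining the two inclusions yields the claimed equality. The argument is essentially a short linear-algebra
identity lifted to bounded operators; the only thing to be careful about is that one never needs to invert
$I\pm V$ or use any spectral information — the explicit witness $z=\tfrac12(z_1-z_2)$ does everything — so the
statement genuinely holds for arbitrary Banach spaces $Z$ and arbitrary $V\in\BL(Z)$, exactly as in the
generalization of \cite[Lemma 7.3.2]{JZ} being claimed. I do not expect a real obstacle here; the only minor
point is bookkeeping the halves and signs correctly when solving the $2\times 2$ block relation.
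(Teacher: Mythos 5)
Your proposal is correct and follows essentially the same route as the paper: the same explicit witness $z=\tfrac12(z_1-z_2)$ for the inclusion of the kernel into the range, and the vanishing of the composition $\begin{bmatrix} I+V & I-V\end{bmatrix}\begin{psmallmatrix} I-V\\ -I-V\end{psmallmatrix}=0$ for the reverse inclusion. No gaps; your version merely spells out the zero-composition computation that the paper leaves implicit.
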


\begin{proof}
Assume $\begin{psmallmatrix}x \\ y \end{psmallmatrix} \in \ker \begin{bmatrix} 
I+ V &  I- V \end{bmatrix}$. Thus, it holds
\begin{equation*}\label{null}x+ Vx+y- Vy=0.\end{equation*}
For $l:=\frac{1}{2}(x-y) \in Z$ we get
\[
( I- V)l=\frac{1}{2}(x-y)-\frac{1}{2} V(x-y)=x
\mbox{ and }
(- I- V)l=-\frac{1}{2}(x-y)-\frac{1}{2} V(x-y)=y.
\]
Thus, it follows $\begin{psmallmatrix}x \\ y \end{psmallmatrix} \in \ran 
\left[\begin{smallmatrix} I-  V \\ - I- V \end{smallmatrix}\right].$
Conversely, assume $\begin{psmallmatrix}x \\ y \end{psmallmatrix} \in \ran 
\left[\begin{smallmatrix} I- V \\ - I- V \end{smallmatrix}\right]$. Then, we have
\begin{align*}
\begin{bmatrix} I+ V &  I- V 
\end{bmatrix}\begin{psmallmatrix}x \\ y \end{psmallmatrix}&=\begin{bmatrix} 
I+ V &  I- V \end{bmatrix}\begin{bmatrix} I- V \\ 
- I- V \end{bmatrix}l=0
\end{align*}
for some $l\in Z$
and the lemma is proved.
\end{proof}

\begin{lemma}{\cite[Lemma 2.4]{KurulaZwart}}\label{teclem}
Let $ W=\begin{bmatrix} W_1 &  W_2 \end{bmatrix} \in \BL(H^{2N}, 
H^N)$ such that $ W_1+ W_2$ is  injective and 
 \begin{equation*} 
\label{eig0}\ran ( W_1- W_2)\subseteq \ran( W_1+ 
W_2).\end{equation*}
Then there exist an unique operator $ V \in \BL(H^N)$ such that
\begin{align}
 W=\begin{bmatrix} W_1 &  W_2 
\end{bmatrix}&=\frac{1}{2}( W_1+ W_2)\begin{bmatrix} I+ V & 
 I- V \end{bmatrix} .\label{eig2}
\end{align}
Moreover, 
\begin{equation*}
\ker \begin{bmatrix}  W_1 &  W_2 \end{bmatrix}=\ker \begin{bmatrix} 
I+ V &  I- V \end{bmatrix} \label{eig3},
\end{equation*}
and 
\begin{equation*} \label{eig4}
\begin{bmatrix}  
W_1 &  W_2 \end{bmatrix} \begin{bmatrix} 0 &  I \\  I & 0 
\end{bmatrix} \begin{bmatrix} W_1 &  W_2  \end{bmatrix}^* \geq 0 
\Leftrightarrow  V V^* 
\leq I.
\end{equation*}
\end{lemma}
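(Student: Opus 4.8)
The plan is to reduce everything to two elementary facts about $M:=W_1+W_2\in\BL(H^N)$: since $M$ is injective it is left invertible on its range, so $M^{-1}:\ran M\to H^{N}$ is a well defined closed (in general unbounded) operator; and, equivalently, injectivity of $M$ means $\overline{\ran M^{*}}=H^{N}$. I would first build $V$. Writing the asserted factorization $W=\tfrac12 M\begin{bmatrix} I+V & I-V\end{bmatrix}$ block by block yields $\tfrac12 M(I+V)=W_1$ and $\tfrac12 M(I-V)=W_2$; their sum is the tautology $M=W_1+W_2$, while their difference shows the factorization is \emph{equivalent} to the single equation $MV=W_1-W_2$. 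By the hypothesis $\ran(W_1-W_2)\subseteq\ran(W_1+W_2)=\ran M$, for each $z\in H^{N}$ there is a $w$, unique by injectivity of $M$, with $Mw=(W_1-W_2)z$; then $Vz:=w$ defines a linear operator with $MV=W_1-W_2$. Uniqueness follows at once: $MV'=W_1-W_2=MV$ forces $M(V-V')=0$, hence $V=V'$. Boundedness I would get from the closed graph theorem: if $z_n\to z$ and $Vz_n\to u$, then $MVz_n=(W_1-W_2)z_n\to(W_1-W_2)z=MVz$ and also $MVz_n\to Mu$, so $Mu=MVz$ and $u=Vz$ by injectivity; equivalently, $V=M^{-1}(W_1-W_2)$ is a bounded operator composed with the closed operator $M^{-1}$, and its domain is all of $H^{N}$ by the range inclusion.

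For the kernel identity I would simply note that, once $W=\tfrac12 M\begin{bmatrix} I+V & I-V\end{bmatrix}$ is established, injectivity of $M$ gives $W\begin{psmallmatrix}x\\ y\end{psmallmatrix}=0$ if and only if $\begin{bmatrix} I+V & I-V\end{bmatrix}\begin{psmallmatrix}x\\ y\end{psmallmatrix}=0$, i.e. $\ker\begin{bmatrix} W_1 & W_2\end{bmatrix}=\ker\begin{bmatrix} I+V & I-V\end{bmatrix}$; a range description of this kernel is then available from Lemma~\ref{teclemV}.

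For the last equivalence I would start from $\begin{bmatrix} W_1 & W_2\end{bmatrix}\begin{bmatrix}0 & I\\ I & 0\end{bmatrix}\begin{bmatrix} W_1 & W_2\end{bmatrix}^{*}=W_2W_1^{*}+W_1W_2^{*}$ and substitute $W_1=\tfrac12 M(I+V)$, $W_2=\tfrac12 M(I-V)$. Using
\[
(I-V)(I+V^{*})+(I+V)(I-V^{*})=2I-2VV^{*},
\]
this collapses to $W_2W_1^{*}+W_1W_2^{*}=\tfrac12 M(I-VV^{*})M^{*}$. The implication $VV^{*}\le I\Rightarrow M(I-VV^{*})M^{*}\ge 0$ is then immediate, since $\langle M(I-VV^{*})M^{*}x,x\rangle=\langle(I-VV^{*})M^{*}x,M^{*}x\rangle\ge 0$ for all $x$.

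The converse is the step I expect to be the main obstacle, and the only point where injectivity of $W_1+W_2$ enters essentially in this direction: from $\langle(I-VV^{*})M^{*}x,M^{*}x\rangle\ge 0$ for every $x\in H^{N}$ one must conclude $I-VV^{*}\ge 0$ on all of $H^{N}$. Here I would observe that the quadratic form $y\mapsto\langle(I-VV^{*})y,y\rangle$ is continuous and, by the displayed inequality, nonnegative on the subspace $\ran M^{*}$; since $\ran M^{*}$ is dense precisely because $M=W_1+W_2$ is injective, it is nonnegative on all of $H^{N}$, i.e. $VV^{*}\le I$. This closes the equivalence.
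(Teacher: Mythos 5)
Your proof is correct. Note that the paper does not prove this lemma at all: it is quoted from \cite[Lemma 2.4]{KurulaZwart}, so there is no in-paper argument to compare against, and your write-up serves as a valid self-contained substitute. The steps you use are the natural ones and all check out: the factorization is equivalent to the single operator equation $(W_1+W_2)V=W_1-W_2$, which the range inclusion and the injectivity of $W_1+W_2$ solve uniquely, with boundedness of the everywhere-defined, closed operator $V$ coming from the closed graph theorem; the kernel identity is immediate from injectivity of $W_1+W_2$ once the factorization is in hand; and for the equivalence $W\Sigma W^*\ge 0 \Leftrightarrow VV^*\le I$ the computation $W\Sigma W^*=\tfrac12 (W_1+W_2)(I-VV^*)(W_1+W_2)^*$ gives one direction immediately, while for the converse you correctly isolate the one point where injectivity is essential: $\ran\,(W_1+W_2)^*$ is dense because $\ker(W_1+W_2)=\{0\}$, and the continuous quadratic form of the self-adjoint operator $I-VV^*$, being nonnegative on this dense subspace, is nonnegative everywhere.
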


\begin{lemma}\label{tl3}
Let $A_0$ be defined as in Definition \ref{A0}.
For an arbitrary element $\begin{psmallmatrix}u \\ v \end{psmallmatrix} \in H^N\times H^N$ 
there exists a function $x \in \D(A_0)$ such that $\Phi(x)=\begin{psmallmatrix}u \\ v 
\end{psmallmatrix}$. 
\end{lemma}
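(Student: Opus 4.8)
The plan is to construct $x$ explicitly as a polynomial in $\zeta$ (with coefficients in $H$) of degree at most $2N-1$, so that $x$ is automatically a smooth $H$-valued function and hence lies in $\mathcal W^{N,2}((0,1);H) = \D(A_0)$. Writing $x(\zeta) = \sum_{m=0}^{2N-1} a_m \zeta^m$ with $a_m \in H$, the constraint $\Phi(x) = \begin{psmallmatrix} u \\ v\end{psmallmatrix}$ with $u = [u_0\;\dots\;u_{N-1}]^T$, $v = [v_0\;\dots\;v_{N-1}]^T$ unpacks into the $2N$ equations $\frac{d^j x}{d\zeta^j}(1) = u_j$ and $\frac{d^j x}{d\zeta^j}(0) = v_j$ for $j = 0,\dots,N-1$. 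The conditions at $\zeta = 0$ immediately give $a_j = v_j/j!$ for $j = 0,\dots,N-1$, so it remains only to choose the top $N$ coefficients $a_N,\dots,a_{2N-1}$ to satisfy the $N$ conditions at $\zeta = 1$.

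The core of the argument is that the linear map $(a_N,\dots,a_{2N-1}) \mapsto \bigl(\frac{d^j x}{d\zeta^j}(1)\bigr)_{j=0}^{N-1}$, after subtracting off the (already fixed) contribution of $a_0,\dots,a_{N-1}$, is given by a square block-triangular matrix over $H$ whose diagonal blocks are nonzero scalar multiples of the identity on $H$. Concretely, differentiating $x$ $j$ times and evaluating at $\zeta=1$ shows that the coefficient of $a_N$ in the $j$-th equation is $\frac{N!}{(N-j)!}$, a nonzero scalar, and the dependence on $a_{N+1},\dots,a_{2N-1}$ is strictly "upper triangular" in the appropriate sense. Thus the system can be solved by back-substitution: choose $a_{2N-1}$ to fix $x(1)$, then $a_{2N-2}$ to fix $x'(1)$, and so on, at each stage inverting only a nonzero scalar. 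This produces the desired $a_N,\dots,a_{2N-1}\in H$, and the resulting polynomial $x$ satisfies $\Phi(x) = \begin{psmallmatrix} u \\ v\end{psmallmatrix}$.

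I do not expect a genuine obstacle here; the statement is a surjectivity fact for the trace operator $\Phi$ restricted to polynomials, and the main point to get right is simply the bookkeeping of which coefficient controls which boundary derivative so that the triangular solve is valid over the (possibly infinite-dimensional) Hilbert space $H$ — everything reduces to inverting nonzero scalars, so no structural hypothesis on $H$ is needed. An alternative, equally short route is to take a smooth cutoff $\chi\in C^\infty([0,1])$ with $\chi \equiv 1$ near $0$ and $\chi \equiv 0$ near $1$, set $x(\zeta) = \chi(\zeta)\sum_{j=0}^{N-1}\frac{v_j}{j!}\zeta^j + (1-\chi(\zeta))\sum_{j=0}^{N-1}\frac{u_j}{j!}(\zeta-1)^j$, and observe directly that this $x\in C^\infty([0,1];H)\subseteq \D(A_0)$ has the prescribed derivatives at both endpoints; I would mention this as the quick proof and use whichever the authors prefer.
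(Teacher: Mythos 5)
Your ``alternative, equally short route'' is in fact exactly the paper's proof: the authors take scalar cutoffs $\phi_0,\phi_1\in C^\infty[0,1]$ with $\phi_1\equiv 1$ near $0$, $\phi_1\equiv 0$ near $1$ (and $\phi_0$ the other way around) and set $x=\phi_0 P_u+\phi_1 P_v$, where $P_v(\zeta)=\sum_i \frac{v_{i+1}}{i!}\zeta^{i}$ and $P_u(\zeta)=\sum_i \frac{u_{i+1}}{i!}(\zeta-1)^{i}$ are the Taylor polynomials anchored at the two endpoints; smoothness gives $x\in\D(A_0)$ and locally $x$ coincides with the appropriate polynomial, so $\Phi(x)$ is as prescribed. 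So that part of your proposal needs no further comment.

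Your primary route (a single Hermite interpolant of degree $\le 2N-1$) also works, but the justification as written has a concrete flaw: in the monomial basis the map $(a_N,\dots,a_{2N-1})\mapsto\bigl(\frac{d^jx}{d\zeta^j}(1)\bigr)_{j=0}^{N-1}$ is not triangular in any ordering --- the coefficient of $a_m$ in the $j$-th equation is $\frac{m!}{(m-j)!}$, which is nonzero for \emph{every} $m\ge N$ and every $j\le N-1$. Hence the proposed back-substitution (``choose $a_{2N-1}$ to fix $x(1)$, then $a_{2N-2}$ to fix $x'(1)$, \dots'') fails literally: adjusting $a_{2N-2}$ perturbs the already arranged value of $x(1)$. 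The statement you need is that this $N\times N$ scalar matrix (a confluent Vandermonde block) is invertible; alternatively you can restore a genuine triangular structure by writing the correction as $\zeta^N\sum_{i=0}^{N-1}b_i(\zeta-1)^i$, since the $j$-th derivative of $\zeta^N(\zeta-1)^i$ at $\zeta=1$ vanishes for $i>j$ and equals $j!\neq 0$ for $i=j$, so back-substitution in the $b_i$ is valid. With either repair your argument is complete, and your point that everything is scalar (acting blockwise on $H^N$, so no hypothesis on $H$ is needed) is exactly right. Comparing the two: the interpolation route yields a canonical polynomial solution and, implicitly, bijectivity of $\Phi$ on polynomials of degree $\le 2N-1$, while the cutoff construction chosen by the paper avoids solving any linear system and is the shorter proof.
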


\begin{proof}
We give a constructive proof: Consider 
$\begin{psmallmatrix}u 
\\ v \end{psmallmatrix} \in H^N\times H^N$ where
\begin{align*}
u=\begin{bmatrix}u_1 \\ \vdots \\ u_N\\ \end{bmatrix}  \text{ and } v=\begin{bmatrix}v_1 \\ 
\vdots \\ v_N\\ \end{bmatrix},
\end{align*}
with entries $u_1,\ldots,u_N,v_1,\ldots,v_N \in H$. To construct a proper function
$x$, we define two polynomials, $P_u(\zeta)$ and $P_v(\zeta)$, by
\begin{align*}
P_u(\zeta):=\sum_{i=0}^N \frac{u_{i+1}}{i!} (\zeta-1)^{i} \text{ and } 
P_v(\zeta):=\sum_{i=0}^N \frac{v_{i+1}}{i!} \zeta^{i}.
\end{align*}
Furthermore, we define the functions $\phi_0 \in \mathcal{C}^{\infty}[0,1]$ and 
$\phi_1\in \mathcal{C}^{\infty}[0,1]$ such that
$\phi_0|_{[0,\eps]}=0$ and $\phi_0|_{[1-\eps,1]}=1$ and analogously
 $\phi_1|_{[0,\eps]}=1$ and $\phi_1|_{[1-\eps,1]}=0$ hold.
Thus, for
\begin{equation*}
x:=(\phi_0\cdot P_u+\phi_1\cdot P_v) I_{H^N}\in \mathcal{C}^{\infty}([0,1];H^N)\subseteq\D(A_0)
\end{equation*}
we get $\Phi(x)=\begin{psmallmatrix}u \\ v \end{psmallmatrix}$.
\end{proof}

\begin{lemma} \label{adiss}
Let $A$ be defined by  \eqref{operatorA}-\eqref{domainA}. Then $A$ is dissipative if and only if $A-P_0$ is
dissipative and it holds $\Re P_0 \leq 0$.
\end{lemma}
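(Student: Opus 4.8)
The plan is to build everything on the integration-by-parts identity of Lemma~\ref{regl}. With $A_0$ and $A_0-P_0$ as in Definition~\ref{A0}, that lemma gives, for every $x\in{\mathcal W}^{N,2}((0,1);H)$,
\[
\Re\langle A_0x,x\rangle=\beta(x)+\Re\langle P_0x,x\rangle,\qquad \beta(x):=\Phi_1(x)^*Q\Phi_1(x)-\Phi_0(x)^*Q\Phi_0(x),
\]
so that $\beta(x)$ depends on $x$ only through $\Phi(x)$ and $\beta(x)=\Re\langle(A_0-P_0)x,x\rangle$. Since $A_0$ and $A_0-P_0$ restrict to $A$ and $A-P_0$ on $\D(A)$, this reads, for $x\in\D(A)$,
\[
\Re\langle Ax,x\rangle=\Re\langle(A-P_0)x,x\rangle+\int_0^1\langle x(\zeta),(\Re P_0)x(\zeta)\rangle_H\,d\zeta .
\]
From this the implication ``$A-P_0$ dissipative and $\Re P_0\le0$ $\Rightarrow$ $A$ dissipative'' is immediate, since both terms on the right-hand side are then $\le0$.

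For the converse I would assume $A$ dissipative and first extract $\Re P_0\le0$. Given $h\in H$ and $\varphi\in\mathcal{C}^{\infty}_{c}((0,1))$ with $\varphi\not\equiv0$, the function $x=\varphi(\cdot)h$ has all boundary values equal to zero, hence $\hat W_B\Phi(x)=0$ and $x\in\D(A)$; the first display then collapses to $\Re\langle Ax,x\rangle=\big(\int_0^1|\varphi|^2\big)\langle h,(\Re P_0)h\rangle_H$. Dissipativity forces $\langle h,(\Re P_0)h\rangle_H\le0$, and since $h\in H$ is arbitrary, $\Re P_0\le0$.

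It then remains to prove that $A-P_0$ is dissipative, i.e.\ $\beta(x)=\Re\langle(A-P_0)x,x\rangle\le0$ for every $x\in\D(A)$. The key observation is that $\beta(x)$ is unchanged, and membership in $\D(A)$ is preserved, when one adds to $x$ any function from $\mathcal{C}^{\infty}_{c}((0,1);H)$, because this alters neither $\Phi(x)$ nor $\hat W_B\Phi(x)$. Using density of $\mathcal{C}^{\infty}_{c}((0,1);H)$ in $X=L^2((0,1);H)$, I would choose $g_n\in\mathcal{C}^{\infty}_{c}((0,1);H)$ with $g_n\to-x$ in $X$ and put $x_n:=x+g_n\in\D(A)$, so that $x_n\to0$ in $X$. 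The second display applied to $x_n$ gives $\Re\langle Ax_n,x_n\rangle=\beta(x)+\int_0^1\langle x_n(\zeta),(\Re P_0)x_n(\zeta)\rangle_H\,d\zeta$, where the integral is bounded in modulus by $\|P_0\|_{\BL(H)}\|x_n\|_X^2\to0$. Letting $n\to\infty$ in $\Re\langle Ax_n,x_n\rangle\le0$ yields $\beta(x)\le0$, which completes the argument.

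The hard part is exactly this last step: one cannot deduce $\Re\langle(A-P_0)x,x\rangle\le0$ merely by subtracting the nonpositive quantity $\Re\langle P_0x,x\rangle$ from $\Re\langle Ax,x\rangle\le0$, since that estimate points the wrong way. What rescues the argument is that the boundary form $\beta$ is insensitive to interior modifications of $x$, so one can make the $L^2$-mass of $x$---and hence the $P_0$-contribution---as small as one likes without leaving $\D(A)$ or changing $\beta(x)$; beyond that, the proof uses only Lemma~\ref{regl} and the boundedness of $P_0$ as a multiplication operator on $X$.
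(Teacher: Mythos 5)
Your proof is correct. The backward direction and the extraction of $\Re P_0\le 0$ through states of the form $x=\varphi(\cdot)z$ with scalar $\varphi\in\mathcal{C}^{\infty}_c(0,1)$ coincide with the paper's own argument. Where you genuinely differ is the remaining step, namely that dissipativity of $A$ forces dissipativity of $A-P_0$: the paper disposes of this in a single sentence (``the operator $A-P_0$ is dissipative if $\Re P_0\le 0$ holds''), which, read literally, is exactly the wrong-way estimate you warn against, since subtracting the nonpositive quantity $\Re\<P_0x,x\>$ from $\Re\<Ax,x\>\le 0$ gives no sign information on $\Re\<(A-P_0)x,x\>$. Your interior-perturbation argument --- adding $g_n\in\mathcal{C}^{\infty}_c((0,1);H)$ with $x+g_n\to 0$ in $L^2$, which leaves $\Phi(x)$, hence membership in $\D(A)$ and the boundary form $\Phi_1(x)^*Q\Phi_1(x)-\Phi_0(x)^*Q\Phi_0(x)$, unchanged while making the $P_0$-contribution negligible --- supplies precisely the justification the paper leaves implicit, and it uses only Lemma \ref{regl}, the boundedness of $P_0$, and density of $\mathcal{C}^{\infty}_c((0,1);H)$ in $L^2((0,1);H)$, so it works without any restriction on $\dim H$. (An alternative route within the paper's toolkit would be to fix the boundary data via Lemma \ref{tl3} with cut-offs of shrinking support, but your density argument is cleaner.) In short: same skeleton as the paper for the parts the paper proves, plus a complete proof of the one step the paper asserts without argument.
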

\begin{proof}
``$\Rightarrow$'': Let $A$ be dissipative. Hence, the operator $A-P_0$ is dissipative if $\Re 
P_0 \leq 0$ holds. We will prove $\Re \<P_0z,z \> \leq 0$ for all  $z \in H$:
Let $z\in H$ and $\Psi(\zeta) \in \mathcal{C}^{\infty}_c(0,1)$ with $\zeta \in [0,1]$ 
an arbitrary, scalar-valued function with $\Psi\not \equiv 0$. We define
$$x:=\Psi(\zeta) z \in \mathcal{C}^{\infty}_c((0,1);H)
\subseteq\D(A) $$ 
and it yields, since the derivation equals zero at the boundary,
\begin{align*}
0 \geq \Re\<Ax,x\>_{L^2}&=\Re\<P_0x,x\>_{L^2}=\Re\<P_0\Psi z,\Psi z\>_{L^2}\notag\\
&=\Re \int_0^1 \abs{\Psi(\zeta)}^2\<P_0 z,z\>_{H}d\zeta\notag\\
&=\norm{\Psi}^2_{L^2}\Re\<P_0 z,z\>_{H}.
\end{align*}

``$\Leftarrow$'': We assume $\Re P_0 \leq 0$ and $\Re \<(A-P_0)x,x\>_{L^2} \leq 0$ for all $x 
\in 
\D(A)$. Thus, we get for $x \in \D(A)$
\begin{align*}
\Re\<Ax,x\>=\Re \<(A-P_0)x,x\>_{L^2}+\Re\<P_0x,x\>_{L^2} \leq 0 \;.
\end{align*}\vspace{-7ex}

\end{proof}

We are now in the position to prove the main results for $J=[0,1]$.

\begin{proof}[Proof of Theorem \ref{theo1}]
Without loss of generality we may assume $\H=I$, see  \cite[Lemma 7.2.3]{JZ}.
The implication $1\Rightarrow 2$ follows by the Lumer-Phillips Theorem, c.f.  
\cite[Theorem II.3.15]{Engel2006}, and the equivalence  $3\Leftrightarrow 4$ has been shown in Lemma \ref{teclem}.

Next, we prove the equivalence $2\Leftrightarrow 5$: Lemma \ref{regl} implies for $x\in\D(A)$
\begin{align*}
\Re \langle A  x,x\rangle  &= \Phi_1(x)^*Q\Phi_1(x)-\Phi_0(x)^*Q\Phi_0(x)+\Re \langle P_0x,x\rangle.
\end{align*}
Note that $x\in  {\mathcal W}^{N,2}((0,1);H)$ satisfies $x\in \D(A)$ if and only if $\begin{psmallmatrix} 
\Phi_1(x)\\\Phi_0(x)\end{psmallmatrix}\in \ker \hat W_B$.
This proves the implication  $5\Rightarrow 2$. We now assume that 2 holds. Then Lemma \ref{adiss} shows that $\Re P_0\le 0$ 
and that $A-P_0$  is dissipative, that is,
$$ \Phi_1(x)^*Q\Phi_1(x)-\Phi_0(x)^*Q\Phi_0(x)\le 0$$
for every $x\in  {\mathcal W}^{N,2}((0,1);H)$ satisfying $\begin{psmallmatrix} \Phi_1(x)\\\Phi_0(x)\end{psmallmatrix}\in \ker \hat W_B$.
Further, by Lemma \ref{tl3}, for an arbitrary element $\begin{psmallmatrix}u \\ v \end{psmallmatrix} \in  \ker \hat W_B$ there
exists a function $x \in \D(A)$ such that $\begin{psmallmatrix} \Phi_1(x)\\\Phi_0(x)\end{psmallmatrix}=\begin{psmallmatrix}u \\ v 
\end{psmallmatrix}$.  This proves 5.

Next, we prove the implication $2\Rightarrow 4$:  Lemma \ref{adiss} shows that $\Re P_0\le 0$ and that $A-P_0$  is 
dissipative, that is, using Lemma \ref{regl} 
\begin{equation}\label{g}
\Re \<f_{\partial,x},e_{\partial,x}\>_{H^{N}}\leq 0, \qquad  x \in 
\D(A).
\end{equation}
For an arbitrary element $ \begin{psmallmatrix} f \\ e \end{psmallmatrix} \in \ker W_B 
\subseteq H^{N}\times H^N$ a function $x \in \D(A)$ exists due to Lemma \ref{tl3} such that
$R_{ext}\Phi(x)=\begin{psmallmatrix} f_{\partial,x} \\ e_{\partial,x} 
\end{psmallmatrix}=\begin{psmallmatrix} f \\ e \end{psmallmatrix}$. 
By equation (\ref{g}) we get $e^*f+f^*e \leq 0$ for all
$\begin{psmallmatrix} f \\ e \end{psmallmatrix} \in \ker W_B$, where $W_B:= 
\begin{bmatrix}W_1 &W_2 \end{bmatrix}$. For $y \in \ker(W_1+W_2)$  we have
$W_B\begin{psmallmatrix} y \\ y \end{psmallmatrix}=0$ and thus $y^*y+yy^* \leq 0$.
Since the norm of an element  is non negative, it follows $y=0$ and therefore
$\ker(W_1+W_2)=\{0\}$, which shows  the injectivity of $W_1+W_2$.
Due to this fact,  by Lemma \ref{teclem} there exists  an operator $V$ satisfying \eqref{eig2}. 
It remains to show that $\|V\|\le 1$. Let $l\in H^N$ be arbitrarily. 
By  Lemma \ref{teclemV} we obtain  $\begin{psmallmatrix} I- V \\- 
I- V \end{psmallmatrix} l \in \ker W_B$. 

From Lemma \ref{tl3} we conclude that a function 
$x \in \D(A_0)$ exists, such that $R_{ext}\Phi(x)=\begin{psmallmatrix}  f_{\partial,x} \\ 
  e_{\partial,x}\end{psmallmatrix}=\begin{psmallmatrix} I- V \\- 
I- V \end{psmallmatrix} l $. Therefore, $\begin{psmallmatrix}  f_{\partial,x} \\ 
  e_{\partial,x}\end{psmallmatrix} \in \ker W_B$ and even $x \in \D(A)$.
In conclusion, we obtain using \eqref{g}
\begin{align}\label{ab}
2\Re\< f_{\partial,x}, e_{\partial,x}\>_{H^N}&=\< f_{\partial,x},  
e_{\partial,x}\>_{H^N}+\< e_{\partial,x}, f_{\partial,x}\>_{H^N}\notag
\\&=\<( I- V)l,(- I-V)l\>_{H^N}+\<(- I- V)l,( I-V)l\>_{H^N}\nonumber
\\&=2\<l,(-I+ V^* V)l\>_{H^N}\leq 0
\end{align}
and therefore $\norm{ V}\leq 1$.

Finally, we show the implication  $4\Rightarrow 1$: 
$A$ is a closed operator, see \cite[Lemma 3.2.2]{Augner}. To prove that $A$ generates a contraction 
semigroup, it is sufficient to verify that $A$ and $A^*$ are dissipative; 
c.f.~\cite[Theorem 6.1.8]{JZ}. Let $x\in \D(A)$. Then, 
we have $\begin{psmallmatrix}  f_{\partial,x} \\ 
  e_{\partial,x}\end{psmallmatrix} \in \ker  W_B$ and from Lemma \ref{teclemV}  it follows that there exists a $l\in H^N$ 
such that $\begin{psmallmatrix}  f_{\partial,x} \\
 e_{\partial,x}\end{psmallmatrix}=\begin{psmallmatrix} I- V \\ - 
 I- V \end{psmallmatrix} l$.
Using  Lemma \ref{regl} and Lemma \ref{teclem}, we obtain
\begin{align*}
2\Re\< Ax,x\>_{L^2}&=2\Re\< f_{\partial,x},  
e_{\partial,x}\>_{H^N}+2\langle P_0x,x\rangle\\
&\le 2\<l,(- I+ V^* V)l\>_{H^N}\leq 
0.\end{align*}
Now we consider the adjoint operator $A^*$: Let $y\in \D(A^*)$. By Lemma 
\ref{aad2}, we obtain $\begin{psmallmatrix}\tilde{ f}_{\partial,y}\\\tilde{ 
e}_{\partial,y}\end{psmallmatrix} \in \ker  S\begin{bmatrix}  I+ V^* 
&  I- V^* \end{bmatrix}$.
Applying Lemma \ref{teclemV} and Lemma \ref{teclem} to the operator  $V^*$, there exists $m \in 
H^N$ such that $\begin{psmallmatrix} \tilde{ f}_{\partial,x} \\ 
\tilde{  e}_{\partial,x}\end{psmallmatrix}=\begin{psmallmatrix} I- 
V^* \\ - I- V^* \end{psmallmatrix} m$. Using again Lemma
\ref{regl} we get
\begin{align} \label{ab3}
2\Re\< A^*y,y\>_{L^2}&\le 2\<m,(- I+ V V^*)m\>_{H^N}\leq 
0,
\end{align}
which concludes the proof.
\end{proof}

\begin{proof}[Proof of Corollary \ref{theo2}]
We want to apply Theorem \ref{theo1} for the proof of Corollary \ref{theo2}. Therefore, we have to check condition 
\eqref{ranbed}. If $\dim H < \infty$,  then $W_1+W_2$ injective implies the surjectivity of $W_1+W_2$ and hence condition 
\eqref{ranbed}. Due to this and Remark \ref{remark}.2 assertions 1, 2, 3, 4 and 5 of Theorem \ref{theo2} are equivalent. 
The 
implications $3\Rightarrow 6$ and  $4\Rightarrow 7$ 
follows, since we have $W_1+W_2$ injective, and thus, 
$W_1+W_2$ is also surjective. Clearly, it follows that $W_B$ is surjective.
A straightforward calculation shows the implication $7\Rightarrow 6$. In order to show $6\Rightarrow 3$ we prove that 
in the finite-dimensional setting the surjectivity of $W_B$ and $W_B\Sigma W_B^*\geq 0$  implies the 
injectivity of $W_1+W_2$. From

\begin{equation*}
 W_B\Sigma W_B^*\geq 0 \Leftrightarrow W_2W_1^*+W_1W_2^*\geq 0,
\end{equation*}
we obtain
\begin{equation*}
 W_1W_1^*+W_2W_1^*+W_2W_2^*+W_1W_2^*=(W_1+W_2)(W_1+W_2)^*\geq(W_1-W_2)(W_1-W_2)^*\geq0.
\end{equation*}
Let $x$ be in $\ker(W_1+W_2)^*$. This yields $x \in \ker(W_1-W_2)(W_1-W_2)^*$. With
\begin{align*}
\norm{(W_1-W_2)^*x}^2&=\<(W_1-W_2)^*x,(W_1-W_2)^*x\>\\&=\<x,(W_1-W_2)(W_1-W_2)^*x\>=\<x,0\>=0
\end{align*}
we get $x \in \ker(W_1-W_2)^*$ and thus, $x \in \ker W_1^*\cap W_2^*$. Since $W_B$ is surjective, $W_B^*$  is injective and 
thus $x=0$. This implies that $W_1+W_2$ is injective.
\end{proof}

\begin{proof}[Proof of Theorem \ref{theo3}] Without loss of generality again we consider just the case $\H=I$. In 
the following proof we will often apply Theorem \ref{theo1} to the operators $A$ and $-A$. 
So, first of all, we have to verify, that also the boundary condition operator $\bar{W}_B$ 
of $-A$ satisfies the condition (\ref{ranbed}).

We define analogously to (\ref{bpv}) the boundary flow and the boundary effort for $-A$:
\begin{align}\label{bargrenzfluss}
\begin{bmatrix}  \bar{{f}}_{\partial, x} \\ \bar{ e}_{\partial,x} 
\end{bmatrix} &:=\frac{1}{\sqrt{2}}\begin{bmatrix}  - Q &  Q \\  I& 
 I \end{bmatrix}\Phi(\H x).
\end{align}
Therefore, it yields $\bar{{f}}_{\partial,x}=-{{f}}_{\partial,x}$ and $\bar{ 
e}_{\partial,x}= e_{\partial,x}$. Due to $\D( A)=\D(- A)$, we get
\begin{align*}\D( A)&=\left\{x \in \mathcal W^{N,2}((0,1);H) |  W_B \begin{bmatrix}  
 f_{\partial,x} \\   e_{\partial,x} \end{bmatrix}=0 
\right\}\\&=\left\{x \in\mathcal W^{N,2}((0,1);H) | \bar{ W}_B 
\begin{bmatrix}  \bar{ f}_{\partial,x} \\ \bar{  e}_{\partial,x} 
\end{bmatrix}=0 \right\}\\&=\left\{x \in\mathcal W^{N,2}((0,1);H) | \bar{W}_B 
\begin{bmatrix}  -{ f}_{\partial,x} \\ {  e}_{\partial,x} \end{bmatrix}=0 
\right\}\end{align*}
and thus,
\begin{equation}\label{wminus}\bar{W}_B=\begin{bmatrix}- W_1 &  W_2 
\end{bmatrix}.\end{equation}
It is easy to check that under condition (\ref{ranbed2}) the operator $\bar{W}_B$ satisfies
(\ref{ranbed}).

Then the equivalences $1\Leftrightarrow 2\Leftrightarrow 5$ follow by Theorem
\ref{theo1} applied for $A$ and $-A$.

$1\Rightarrow 4$: Let $A$ be the generator of a unitary group. Then, due to Theorem 
\cite[Theorem 6.2.5]{JZ} $A$ and $-A$ are generators of contraction semigroups. It follows 
$\Re 
P_0=0$, $W_1+W_2$ and $- W_1+W_2$ are injective and $\Re \< Ax,x\>=0\;$ for all $x \in 
\D(A)$ 
by Theorem 
\ref{theo1}. Thus, we get with the estimation (\ref{ab})
\begin{equation} 
 0=2\Re \< Ax,x\>=2\<l,(-I+V^*V)l)\>_{H^N} \text{ for all } \;l \in H^N
\end{equation}
and therefore $\norm{V}=1$.

$4 \Rightarrow 3$: Let $\Re P_0=0$, $\norm{V}=1$,  $ W_1+W_2$ and $- W_1+W_2$  injective. 
Define $S:=\frac{1}{2}(W_1+W_2)$ and with the technical Lemma \ref{teclem} (Lemma 2.4 in 
\cite{KurulaZwart}) it yields 
\begin{align*}
 W_B\Sigma W_B^*&=S\begin{bmatrix}I+V & I-V \end{bmatrix}\begin{bmatrix} 0 & I 
\\ I & 0\end{bmatrix} (S\begin{bmatrix}I+V & I-V \end{bmatrix})^*\\&=S(2I-2VV^*)S^*=0.
\end{align*}
The implication $3 \Rightarrow 1$ follows analogously to the proof of $3
\Rightarrow 1$ in Theorem \ref{theo1} for the operator $-A$. However, instead of the boundary effort and the boundary 
flow for $A$ we need to consider them for $-A$ and have to determine the boundary condition 
operator $\bar{W}_B$ for $-A$.
\end{proof}

\section{Proofs of the main results: $J=[0,\infty)$} \label{ch:proof}

Throughout this section we will assume that $J=[0,\infty)$ and  $A$ is given by \eqref{Aapp}-\eqref{DAapp}. For the proof 
of 
the main statements we need the following technical assertions.

\begin{lemma}\label{posneg}\begin{enumerate}
 \item \label{Lemmapos}
Assume  $\Lambda \in \mathbb \R^{n_1\times n_1}$ is a positive, invertible diagonal matrix and $y\in L^2([0,\infty);\mathbb 
F^{n_1})$. Then the function 
\begin{equation}\label{eqn:Lemmapos}
x(t):= \int_0^\infty e^{-s\Lambda^{-1}} \Lambda^{-1}y(s+t)\, ds, \qquad t\ge 0,
\end{equation}
satisfies $x\in \mathcal W^{1,2}([0,\infty);\mathbb F^{n_1})$ and $x-\Lambda x' =y$.

\item \label{Lemmaneg}
Let $\Theta\in\mathbb R^{n_2\times n_2}$ be a negative, invertible diagonal matrix, $y\in L^2([0,\infty);\mathbb F^{n_2})$ 
and $x_0\in\mathbb F^{n_2}$. Then the differential equation
\begin{equation}\label{eqn:Lemmaneg}
x-\Theta x' =y, \quad x(0)=x_0,
\end{equation}
has a unique solution satisfying  $x\in \mathcal W^{1,2}([0,\infty);\mathbb F^{n_2})$.
\end{enumerate}
\end{lemma}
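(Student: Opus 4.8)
The two parts are really the same computation run in two regimes, so the plan is to treat them in parallel. For part~\ref{Lemmapos}, the formula \eqref{eqn:Lemmapos} is an explicit candidate and the work is just verification. First I would substitute $\sigma = s+t$ to rewrite $x(t) = \int_t^\infty e^{-(\sigma-t)\Lambda^{-1}}\Lambda^{-1} y(\sigma)\,d\sigma = e^{t\Lambda^{-1}}\int_t^\infty e^{-\sigma\Lambda^{-1}}\Lambda^{-1}y(\sigma)\,d\sigma$; this makes differentiation in $t$ transparent and shows $x$ is absolutely continuous with $x'(t) = \Lambda^{-1}x(t) - \Lambda^{-1}y(t)$, i.e. $x - \Lambda x' = y$, for a.e.\ $t$. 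The remaining point is the $L^2$-bound. Since $\Lambda$ is a positive diagonal matrix, it suffices to argue componentwise, so the claim reduces to the scalar statement: if $\lambda>0$ and $y\in L^2([0,\infty))$, then $t\mapsto \int_0^\infty e^{-s/\lambda}\lambda^{-1}y(s+t)\,ds$ lies in $L^2$. Here I would invoke Young's convolution inequality: writing $k(s) = \lambda^{-1}e^{-s/\lambda}\mathbbm{1}_{[0,\infty)}(s)$ and $\tilde y(\sigma)=y(-\sigma)$ (or just bounding directly), $x(t)=(k * \tilde y)(-t)$ up to reflection, and $\|k\|_{L^1}=1$, so $\|x\|_{L^2}\le \|k\|_{L^1}\|y\|_{L^2}=\|y\|_{L^2}$. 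Then from $x' = \Lambda^{-1}(x-y)$ one gets $x'\in L^2$ as well, hence $x\in \mathcal W^{1,2}$. Uniqueness is not asserted in part~\ref{Lemmapos} (indeed the homogeneous equation $x=\Lambda x'$ has the exponentially growing solution $e^{t\Lambda^{-1}}c$, which is the only one in $\mathcal W^{1,2}$ if $c=0$), so nothing further is needed; one could remark that \eqref{eqn:Lemmapos} is in fact the unique $\mathcal W^{1,2}$ solution, since any two differ by such an exponential, which is in $L^2$ only if it vanishes.

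For part~\ref{Lemmaneg}, the sign of $\Theta$ is reversed, so the natural direction of integration flips: the equation $x-\Theta x' = y$, $x(0)=x_0$, with $\Theta$ negative diagonal, is a forward initial value problem with stable dynamics. Explicitly, componentwise with $\theta<0$, the solution of $x - \theta x' = y$, $x(0)=x_0$, is
\[
x(t) = e^{t/\theta}x_0 + \int_0^t e^{(t-s)/\theta}\,(-\theta)^{-1} y(s)\,ds,
\]
and since $1/\theta < 0$ the kernel $(-\theta)^{-1}e^{t/\theta}\mathbbm{1}_{[0,\infty)}$ is again an $L^1$ function of $L^1$-norm $1$ and decays, while $t\mapsto e^{t/\theta}x_0$ is in $L^2$. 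So existence and uniqueness of the solution of the ODE is classical (linear ODE with constant coefficients), and the only thing to check is again $x\in\mathcal W^{1,2}$: $x\in L^2$ by the exponential decay of the homogeneous part plus Young's inequality for the convolution part, and then $x' = \Theta^{-1}(x-y)\in L^2$. I would again reduce to scalars by diagonality of $\Theta$.

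The only mild obstacle is the $L^2$-estimate, and specifically being careful that in part~\ref{Lemmapos} the relevant convolution integrates toward $+\infty$ (against a kernel supported on $[0,\infty)$ but "looking forward") whereas in part~\ref{Lemmaneg} it integrates over $[0,t]$ with the exponential decaying as $t\to\infty$; in both cases the decisive fact is that the convolution kernel has $L^1$-norm exactly $1$, which is why the bound $\|x\|_{L^2}\le\|y\|_{L^2}$ (plus a contribution from $x_0$ in part~\ref{Lemmaneg}) comes out cleanly. Everything else — absolute continuity, differentiating under the integral sign, the pointwise ODE identity — is routine once the substitution $\sigma=s+t$ (resp.\ the variation-of-constants formula) is written down. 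I do not expect any genuine difficulty; the lemma is a packaging of standard facts about the scalar ODE $x - \lambda x' = y$ on a half-line, organized so that the positive-eigenvalue block and the negative-eigenvalue block of $P_1$ are handled with the correct causal direction.
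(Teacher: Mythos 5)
Your proposal is correct and follows essentially the same route as the paper: verify the explicit formula in part 1 by rewriting the integral and reading off $x'=\Lambda^{-1}x-\Lambda^{-1}y$, obtain the $L^2$-bound from the unit-$L^1$-norm exponential kernel (the paper uses Minkowski's integral inequality, you use Young's convolution inequality---the same estimate in substance), and treat part 2 as a stable forward initial value problem. The only difference is cosmetic: for part 2 the paper simply cites a standard result on stable linear systems (Hinrichsen--Pritchard), whereas you carry out the variation-of-constants estimate by hand.
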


\noindent
{\bf Proof} Part 1: $\Lambda>0$ and $y\in L^2([0,\infty);\mathbb F^{n_1})$ imply that $x(t)$ is well defined  for every $t\ge 
0$.  
Minkowski's integral inequality shows $x\in L^2([0,\infty);\mathbb F^{n_1})$. Further, the solution of $x-\Lambda x' =y$, or 
equivalently, of $x'=\Lambda^{-1}x-\Lambda^{-1}y$ is given by
\[ x(t)= e^{t\Lambda^{-1}}x(0) -\int_0^t e^{(t-s)\Lambda^{-1}} \Lambda^{-1}y(s)\, ds, \qquad t\ge 0.\]
The choice of $x(0)=  \int_0^\infty e^{-s\Lambda^{-1}} \Lambda^{-1}y(s)\, ds$, implies \eqref{eqn:Lemmapos}. Moreover, 
$x'=\Lambda^{-1}x-\Lambda^{-1}y$ and hence  $x\in \mathcal W^{1,2}([0,\infty);\mathbb F^{n_1})$. \\
Part 2: We first note that \eqref{eqn:Lemmaneg} is equivalent to  $x'=\Theta^{-1}x-\Theta^{-1}y$. Now the statement of 
the lemma follows from ODE-Theory for linear stable systems, since $\Theta<0$ and $y\in L^2([0,\infty);\mathbb 
F^{n_2})$, see \cite[Proposition 3.3.22]{HiPr05}. \hfill${\Box}$\\

\begin{proof}[Proof of Theorem \ref{application}]
Thanks to  \cite[Lemma 7.2.3]{JZ} and the Theorem of Lumer-Phillips 1 implies 2.\\
Next, we show the implication 2$\,\Rightarrow\,$3. Using integration by parts and $P_1^*=P_1$, it yields $2\Re 
\<P_1\frac{d}{d\zeta}x,x\>=-x(0)^*P_1x(0)$, since $\lim_{\zeta\rightarrow \infty}x(\zeta)=0$ for $x\in \mathcal 
W^{1,2}([0,\infty);\mathbb F^d)$. Thus, for $x\in D(A)$ we have
\begin{equation}\label{eq:1HZ}2 {\rm Re}\, \langle Ax, x\rangle =2\Re\<P_1\frac{d}{d\zeta}x+P_0x,x\>= -x(0)^*P_1 x(0) + 
2{\rm Re}\,\int_ 0^\infty 
x(\zeta)^*P_0x(\zeta)\,d\zeta.
\end{equation}
Choosing $x \in{\mathcal  W}^{1,2}([0,\infty);{\mathbb F}^d)\backslash\{0\}$ with $x(0)=0$, we obtain  Re$\, P_0\le 0$.  For 
every $y 
\in 
{\mathbb F}^d$ and every $\varepsilon >0$ there exists a function $x \in \mathcal W^{1,2}([0,\infty);{\mathbb F}^d)$ such 
that $x(0)=y$ 
and the $L^2$-norm of $x$ is less than $\varepsilon$. Choosing this function in equation (\ref{eq:1HZ}) and letting 
$\varepsilon$ go to zero implies the second assertion in 3.\\
In order to prove the implication 3$\,\Rightarrow\,$4, for $x\in D(A)$ we define $\left[\begin{smallmatrix} 
f_1\\f_2\end{smallmatrix}\right]:=Sx(0)$.
Using \eqref{eqndiagonalohneH}, the second condition in 3 can be written as
\begin{equation}
  \label{eq:3}
  \left[ \begin{array}{cc} f_1^* & f_2^* \end{array} \right]
   \left[ \begin{array}{cc} \Lambda & 0 \\ 0 & \Theta \end{array} \right]
   \left[ \begin{array}{c} f_1\\ f_2 \end{array} \right] \geq 0, \quad \mbox{
     for } \left[ \begin{array}{c} f_1\\ f_2 \end{array} \right] \in
   \ker {\hat W}_B S^{-1}.
\end{equation}
Since ${\hat W}_BS^{-1}$ is a full row rank $k\times  d$-matrix with $k\leq n_2$, its kernel has
dimension $d-k$.
By the assumptions on  $\Lambda$ and $\Theta$,  we have $d-k\le n_1$, or equivalently,  $k\ge n_2$. Thus $k=n_2$.

We write $\hat W_BS^{-1}=\begin{bmatrix} U_1 & U_2 
\end{bmatrix}$ with $U_1\in \F^{n_2 \times n_1}$ and $U_2 \in \F^{n_2\times n_2}$. Assuming $U_2$ is not invertible, 
there exists $u\in \mathbb F^{n_2}$ such that $\left[\begin{smallmatrix} 
0\\u\end{smallmatrix}\right] \in \ker \hat{W}_BS^{-1}$ which is  in
 contradiction to \eqref{eq:3}, since $\Theta<0$. Thus, the matrix $\hat W_BS^{-1}$ is of 
the form $B\begin{bmatrix} U & I\end{bmatrix}$, with $U\in \mathbb 
F^{n_2\times n_1}$ and $B\in \mathbb F^{n_2\times n_2}$ invertible. Hence, \eqref{eq:3} is equivalent to
\begin{align}\label{eq:3.1} \left[ \begin{array}{cc} f_1^* & f_2^* \end{array} \right]
   \left[ \begin{array}{cc} \Lambda & 0 \\ 0 & \Theta \end{array} \right]
   \left[ \begin{array}{c} f_1\\ f_2 \end{array} \right] \geq 0 \quad \mbox{
     and }  Uf_1+f_2=0, \mbox{ for } \begin{bmatrix} f_1 \\f_2\end{bmatrix} \in \F^{n_1+n_2}\end{align}
which is equivalent to $\Lambda +U^*\Theta U\ge 0$. This shows 4.\\
It remains to show that 4 implies 1. By  \cite[Lemma 7.2.3]{JZ} it is 
sufficient to prove that $A$ generates a 
contraction semigroup on $(X, \langle \cdot, \cdot\rangle)$.
Due to the fact that Re$\, P_0\le 0$, and  bounded, dissipative perturbations of generators of contraction semigroups, again 
generate a contraction semigroup, see \cite[Theorem III.2.7]{EN}, without loss of generality we may assume $P_0=0$. 

First, we prove the dissipativity of the operator $A$. Let $x\in \D(A)$ and define $\left[\begin{smallmatrix} 
f_1\\f_2\end{smallmatrix}\right]:=Sx(0)$, where the unitary matrix $S$ is given by \eqref{eqndiagonalohneH}. This 
implies $Uf_1+f_2=0$ as $\hat W_B=B \begin{bmatrix} U & I \end{bmatrix}S$. 

Thus, it yields
\begin{align*}
\Re \<Ax,x\>&=-\<x(0),P_1x(0)\>_{\F^d}=-\<x(0),S^{-1}\begin{bmatrix}\Lambda & 0 \\ 0 & \Theta \end{bmatrix}Sx(0)\>_{\F^d}\\
&=-\<Sx(0),\begin{bmatrix}\Lambda & 0 \\ 0 & \Theta \end{bmatrix}Sx(0)\>_{\F^d}=-(f_1^*\Lambda f_1+f_2^*\Theta f_2)\\
&=-(f_1^*\Lambda f_1+f_1^*U^*\Theta Uf_1)\leq 0
\end{align*}
by the last assertion of 4.

Further, thanks to the Theorem of Lumer-Phillips it remains to show that for every $y\in L^2([0,\infty);\mathbb F^d)$ there 
exists $x\in D(A)$ such that $x-Ax=y$. Equivalently, by \eqref{eqndiagonalohneH} it is sufficient to show that for every 
$y_1\in L^2([0,\infty);\mathbb F^{n_1})$ and $y_2\in L^2([0,\infty);\mathbb F^{n_2})$ there exist functions
$x_1\in \mathcal W^{1,2}([0,\infty);\mathbb F^{n_1})$ and $x_2\in \mathcal W^{1,2}([0,\infty);\mathbb F^{n_2})$ such that
\[ x_1-\Lambda x_1' =y_1, \,\,\,  x_2-\Theta x_2'=y_2 \quad \mbox{ and } \quad U x_1(0)+x_2(0)=0.\]
Let $y_1\in L^2([0,\infty);\mathbb F^{n_1})$ and $y_2\in L^2([0,\infty);\mathbb F^{n_2})$ be arbitrarily. Lemma 
\ref{posneg}.\ref{Lemmapos} 
implies the existence of $x_1\in \mathcal W^{1,2}([0,\infty);\mathbb F^{n_1})$ with 
$x_1(0)= \int_0^\infty e^{-s\Lambda^{-1}} \Lambda^{-1}y_1(s)\, ds$ and 
 $x_1-\Lambda x_1' =y_1$. Finally, Lemma \ref{posneg}.\ref{Lemmaneg} shows that there exists a function
$x_2\in \mathcal W^{1,2}([0,\infty);\mathbb F^{n_1})$ with 
$x_2(0)= -Ux_1(0)$  and 
 $x_2-\Theta x_2' =y_2$. This concludes the proof.
\end{proof}

\begin{proof}[Proof of Theorem \ref{application2}]
Since $A\H$ generates a unitary $C_0$-group if and only if $A\H$ and $-A\H$ generate contraction semigroups 
c.f. \cite[Theorem 6.2.5]{JZ}, the equivalence of assertions 1, 2, and 3 follows directly from 
Theorem \ref{application} for $-A\H$ and $A\H$. 

Formulating assertion 4 of Theorem \ref{application} for $-A$ , we get  $\Re (-P_0) \leq 0$, $k=n_1$, 
$${ \hat {W}}_B=\bar B\begin{bmatrix}I & \bar U \end{bmatrix} S
$$ and $\Theta+\bar U^*\Lambda \bar U \leq 0$, where 
$\bar B \in K^{n_1 \times n_1}$ is invertible. 
Thus, assertion 4 of  Theorem \ref{application} for $-A$ and $A$ is equivalent to  $\Re P_0=0$,  $k=n_1=n_2$ and $\hat W_B =\bar B\begin{bmatrix}I & \bar U \end{bmatrix} S= B\begin{bmatrix} U & I 
\end{bmatrix} S$ with $B$ and $\bar B$ invertible. 
It yields $\bar B=BU$ and $B=\bar B \bar U$ with $B, \bar B$ invertible. Therefore, we get $\bar UU=I$ 
and $\bar U, U$ invertible. Thus, we have $\Theta+\bar U^*\Lambda \bar U \leq 0 \Leftrightarrow U^*\Theta U+\Lambda \leq 
0$. Choosing $U_1=BU$ and $U_2=B$ we get the assertion.
\end{proof}

\section{Examples}
 In this section we now illustrate our results by a number of examples.
Networks of discrete partial differential equations on infinite networks are also considered in 
\cite{Delio}.  
Examples with $J=[0,1]$ and a finite-dimensional Hilbert space $H$ can be found in \cite{JZ}.
\begin{example} 
We choose $J=[0,1]$, $H=\ell^2(\N)$ and consider the operator $A$ given by 
\begin{equation}
 Af=\frac{\partial}{\partial \zeta}f
\end{equation}
on the domain
\begin{equation}
\D(A)=\left\{f \in \mathcal{W}^{1,2}((0,1); \ell^2(\N))| \begin{bmatrix}I & -L\end{bmatrix}\Phi(f)=0 \right\}.
\end{equation}
This means that the network is a path graph, see Figure \ref{linegraph}.

\begin{figure}[h!]\centering \begin{tikzpicture}
\draw[fill=black](0,0)circle(1pt);
\draw[fill=black](0.5,0)circle(1pt);
\draw[fill=black](1,0)circle(1pt);
\draw[fill=black](1.5,0)circle(1pt);
\draw[fill=black](2,0)circle(1pt);
\draw[fill=black](2.5,0)circle(1pt);
\draw[fill=black](3,0)circle(1pt);
\draw[fill=black](3.5,0)circle(1pt);
\draw[fill=black](4,0)circle(1pt);
\draw[fill=black](4.5,0)circle(1pt);
\draw(0,0)->(4.5,0);
\draw[dotted, color=red, thick](4.5,0)->(5.5,0);
\end{tikzpicture}\caption{Path graph}
\label{linegraph}
\end{figure}Clearly, $A$ denotes a port-Hamiltonian operator with $N=1$, $P_1=I$, $P_0=0$ 
and
$W_B=\frac{1}{2}\begin{bmatrix}I+L &I-L \end{bmatrix}$. Here $L$ denotes the left shift and $L^*=R$ the right shift, i.e.,
$L: \ell^2(\N) \to \ell^2(\N)$ is defined by $L(x_1,x_2,\ldots)\mapsto (x_2,x_3,\ldots)$ and $R: \ell^2(\N) \to \ell^2(\N)$ 
is given as $R(x_1,x_2,\ldots)\mapsto (0,x_1,x_2,\ldots)$.
Clearly, it yields $ W_1+W_2=I$, and thus, condition \eqref{ranbed} is fulfilled. Therefore, we can apply Theorem 
\ref{theo1} and check assertion 3:
$W_1+W_2$ is injective and
\begin{align*}
W_B \Sigma W_B^*&=\frac{1}{4}\begin{bmatrix}I+L &I-L \end{bmatrix}\Sigma \begin{bmatrix} I+L 
 &I-L \end{bmatrix}^*=\frac{1}{4}\begin{bmatrix}I-L &I+L \end{bmatrix}\begin{bmatrix} I+L^* 
&I-L^* \end{bmatrix}\\&=\frac{1}{4}(( I-L)(I+L^*)+(I+L)( I-L^*))=\frac{1}{4}(2 I-2LL^*)=0.
\end{align*}
Hence, $A$ generates a contraction semigroup. In the finite-dimensional setting we would 
expect that $A$ also generates a unitary $C_0$-group, since $W_B \Sigma W_B^*=0$. 
However, we can apply Theorem \ref{theo3}, since condition \eqref{ranbed2} is fulfilled: $\ran(L)=\ran(I)$, because the 
left shift is surjective. Using assertion 3 of Theorem \ref{theo3}, we can conclude that $A$ does not generate a unitary 
$C_0$-group, since $-W_1+W_2=-L$ and the left shift is not injective. 
\end{example}

\begin{example}
We choose $J=[0,1]$, $H=\ell^2(\N)$ and consider the operator $A$ given by 
\begin{equation}
 Af=\frac{\partial}{\partial \zeta}f
\end{equation}
on the domain
\begin{equation}
\D(A)=\left\{f \in \mathcal{W}^{1,2}((0,1); \ell^2(\N))| \begin{bmatrix}I & T\end{bmatrix}\Phi(f)=0 \right\},
\end{equation}
where $T: \ell^2(\N) \to \ell^2(\N)$ is defined by $$T(x_1,x_2,\ldots)\mapsto 
\frac{1}{2}(-x_3-x_4,-x_5-x_6,-x_7-x_8,\ldots).$$
These boundary conditions imply that the network is a binary tree, see Figure \ref{bintree}. ~\\
\begin{minipage}[t]{0.5\textwidth}
\vspace{0pt} 
\begin{tikzpicture}
[level 1/.style={sibling distance=30mm,level distance=10mm},level 2/.style={sibling distance=15mm,level distance=10mm},level 3/.style={sibling distance=5mm,level distance=5mm},level 4/.style={sibling distance=3mm,level distance=3mm}]
\tikzstyle{every node}=[fill=black,circle,inner sep=1pt]
\node(0) {}
 {
child {node (1) {} 
			child {node (3) {}
							child {node (7) {}
									child[dotted] {node[fill=none,rectangle] (a) {}} 
									child[dotted]{node [fill=none,rectangle] (b) {}}} 
					child{node (8) {}
									child[dotted] {node[fill=none,rectangle] (c) {}} 
									child[dotted]{node [fill=none,rectangle] (d) {}}} } 
		child{node (4) {}
					child {node (9) {}
									child[dotted] {node[fill=none,rectangle] (e) {}} 
									child[dotted]{node [fill=none,rectangle] (f) {}}}
					child{node (10) {}
									child[dotted] {node[fill=none,rectangle] (g) {}} 
									child[dotted]{node [fill=none,rectangle] (h) {}}}}}	
child{node(2) {}
		child {node (5) {}
					child {node (11) {}
									child[dotted] {node[fill=none,rectangle] (e) {}} 
									child[dotted]{node [fill=none,rectangle] (f) {}}}
					child{node (12) {}
									child[dotted] {node[fill=none,rectangle] (e) {}} 
									child[dotted]{node [fill=none,rectangle] (f) {}}}}
		child{node (6) {}
					child {node (13) {}
									child[dotted] {node[fill=none,rectangle] (e) {}} 
									child[dotted]{node [fill=none,rectangle] (f) {}}} 
					child{node (14) {}child[dotted] {node[fill=none,rectangle] (e) {}} 
									child[dotted]{node [fill=none,rectangle] (f) {}}}}}
		}
;
\path  (0) -- (1) node[draw=none, fill=none, midway, above=0.1pt]{$f_1$};
\path  (0) -- (2) node[draw=none, fill=none, midway, above=0.1pt]{$f_2$};
\path  (1) -- (3) node[draw=none, fill=none, midway, left=0.1pt]{$f_3$};
\path  (1) -- (4) node[draw=none, fill=none, midway, right=0.1pt]{$f_4$};
\path  (2) -- (5) node[draw=none, fill=none, midway, left=0.1pt]{$f_5$};
\path  (2) -- (6) node[draw=none, fill=none, midway, right=0.1pt]{$f_6$};
\path  (3) -- (7) node[draw=none, fill=none, midway, left=0.1pt]{$f_7$};
\path  (3) -- (8) node[draw=none, fill=none, midway, right=0.1pt]{$f_8$};
\path  (4) -- (9) node[draw=none, fill=none, midway, left=0.1pt]{$f_9$};
\path  (4) -- (10) node[draw=none, fill=none, midway, right=0.1pt]{$f_{10}$};
\path  (5) -- (11) node[draw=none, fill=none, midway, left=0.1pt]{$f_{11}$};
\path  (5) -- (12) node[draw=none, fill=none, midway, right=0.1pt]{$f_{12}$};
\path  (6) -- (13) node[draw=none, fill=none, midway, left=0.02pt]{$f_{13}$};
\path  (6) -- (14) node[draw=none, fill=none, midway, right=0.02pt]{$f_{14}$};
\end{tikzpicture} 
\vspace{-1em}
\captionof{figure}{Binary tree}
\label{bintree}
\end{minipage}
\begin{minipage}[t]{0.5\textwidth}
\vspace{0pt}
We write $f\in \mathcal{W}^{1,2}((0,1); \ell^2(\N))$ as $f=(f_1,f_2,\ldots)^T$, where 
$f_i\in \mathcal{W}^{1,2}((0,1);\C^d)$ denotes a function on the $i$-th edge of the binary tree. Clearly, $A$ denotes a 
port-Hamiltonian operator with $N=1$, $P_1=I$, $P_0=0$
and $W_B=\frac{1}{2}\begin{bmatrix}I-T &I+T \end{bmatrix}$. 
It yields $ W_1+W_2=I$, and thus, condition \eqref{ranbed} is fulfilled. 
\end{minipage}
~\\
$W_1+W_2$ is injective and $T^*:\ell^2(\N) \to \ell^2(\N)$ is given by $$T^*(x_1,x_2,\ldots)\mapsto 
\frac{1}{2}(0,0,-x_1,-x_1,-x_2,-x_2,\ldots).$$ We obtain
\begin{align*}
W_B \Sigma W_B^*&=\frac{1}{4}(2 I-2TT^*)=\frac{1}{4}I.
\end{align*}
Hence, $A$ generates a contraction semigroup. 
\end{example}

\begin{example}
Let $J=[0,\infty)$ and $A$ be given by (\ref{Aapp})-(\ref{DAapp}).
\begin{enumerate}[leftmargin=2em]
\item Let $P_1<0$, that is, $n_2=d$, and $\Re P_0 \leq 0$. In this 
situation 
 $A{\cal H}$ with domain 
 \[\D(A{\cal H})={\cal 
H}^{-1}\D(A)= \{x\in X\mid {\cal H}x\in  {\mathcal W}^{1,2}([0,\infty);\C^d)\mbox{ and } 
(\H  x)(0) = 0\}\]
 generates a contraction semigroup on $(X,\langle \cdot,{\cal 
H};\cdot\rangle)$.
\item Let  $P_1>0$, that is, $n_2=0$ and $\Re P_0 \leq 0$. Then 
 $A{\cal H}$ with domain 
 \[\D(A{\cal H})={\cal 
H}^{-1}\D(A)= \{x\in X\mid {\cal H}x\in  {\mathcal W}^{1,2}([0,\infty);\C^d)\}
\] 
generates a contraction semigroup on $(X,\langle \cdot,{\cal 
H};
\cdot\rangle)$.
\item An (undamped) vibrating string can be modelled by 
  \begin{align}
  \label{eq:7.1.1}
    \frac{\partial^2 w}{\partial t^2} (\zeta,t) = \frac{1}{\rho(\zeta)} \frac{\partial }{\partial \zeta} \left( T(\zeta) \frac{\partial w}{\partial \zeta}(\zeta,t) \right), \qquad t\ge 0, \zeta\in [0,\infty),
  \end{align}
 where  $w(\zeta,t)$
  is the vertical position of the string at place $\zeta$ and time $t$, $T (\zeta)>0 $ is the Young's modulus
  of the string, and $\rho (\zeta ) >0$ is the mass density, which may vary along the string. We assume that $T$ and $\rho$ are positive functions satisfying $\rho, \rho^{-1}, T, T^{-1} \in L^{\infty}[0,\infty)$.  By choosing the state variables 
$x_1= \rho \frac{\partial w}{\partial t}$ (momentum) and $x_2 = \frac{\partial w}{\partial \zeta}$ (strain), the partial differential equation  (\ref{eq:7.1.1}) can equivalently be written as
\begin{align}
  \nonumber
  \frac{\partial }{\partial t} \begin{bmatrix} x_1(\zeta,t) \\ x_2(\zeta,t) \end{bmatrix} &= \begin{bmatrix} 0 & 1 \\ 1 & 0 \end{bmatrix} \frac{\partial }{\partial \zeta}\left( \begin{bmatrix} \frac{1}{\rho(\zeta)} & 0 \\ 0 & T(\zeta) \end{bmatrix}\begin{bmatrix} x_1(\zeta,t) \\ x_2(\zeta,t) \end{bmatrix} \right) \\
 \label{eq:7.2.3} &= P_1  \frac{\partial }{\partial \zeta}\left( {\cal H}(\zeta)\begin{bmatrix} x_1(\zeta,t) \\ x_2(\zeta,t) \end{bmatrix} \right),
\end{align}
where $P_1=\left[\begin{smallmatrix} 0 & 1\\ 1 & 0 \end{smallmatrix}\right]$ and  ${\cal 
H}(\zeta)=\left[\begin{smallmatrix}\frac{1}{\rho(\zeta)} & 0\\ 0 & T(\zeta)  \end{smallmatrix}\right]$. 

The boundary conditions for (\ref{eq:7.2.3}) are 
\[ \hat W_B ({\mathcal H}x)(0,t)  =0,\]
where $\hat W_B$ is a $k\times 2$-matrix with rank $k\in\{0,1,2\}$,
or equivalently, the partial differential equation \eqref{eq:7.1.1} is equipped with the boundary conditions
\[   \hat W_B \begin{bmatrix}    \frac{\partial w}{\partial t} (0,t) \\ T\frac{\partial w}{\partial \zeta}(0,t)\end{bmatrix} =0.\]
The matrix  $P_1 $ can be factorized as
\[    P_1  = \begin{bmatrix} 1 & -1 \\ 1 & 1 \end{bmatrix} \begin{bmatrix} 1 & 0 \\ 0 & -1 \end{bmatrix} 
\begin{bmatrix} 1/2 & 1/2 \\  -1/2 & 1/2 \end{bmatrix}, \]
This implies $n_2=1$. Thus, by Theorem \ref{application} the corresponding operator 
\begin{align*}
(A{\mathcal H}x)(\zeta) &= \begin{bmatrix} 0 & 1\\ 1 & 0 \end{bmatrix}  \frac{\partial }{\partial \zeta}\left( \begin{bmatrix}\frac{1}{\rho(\zeta)} & 0\\ 0 & T(\zeta)  \end{bmatrix} x(\zeta) \right);\\
D(A{\mathcal H}) &=\left\{x\in  {\cal W}^{1,2}((0,1);\mathbb F^2)\mid \hat W_B ({\mathcal H}x)(0,t) =0\right\},
\end{align*}
generates a  contraction semigroup on $(L^2((0,1);\mathbb C^2),\langle \cdot,{\cal 
H};\cdot\rangle)$ if and only if 
\[   \hat W_B = \frac{b}{2}  \begin{bmatrix} u-1 & u+1 \end{bmatrix}  \]
for $b\in\F\backslash\{0\}$ and $u\in\F$. More precisely,  the
partial differential equation
\begin{align*}
    \frac{\partial^2 w}{\partial t^2} (\zeta,t) &= \frac{1}{\rho(\zeta)} \frac{\partial }{\partial \zeta} \left( T(\zeta) \frac{\partial w}{\partial \zeta}(\zeta,t) \right), \qquad t\ge 0,  \zeta\in [0,\infty),\\
    (u-1)  & \frac{\partial w}{\partial t} (0,t) +(u+1)  T(0)\frac{\partial w}{\partial \zeta} (0,t) =0, \qquad t\ge 0,\\
     \rho(\zeta)\frac{\partial w}{\partial t}(\zeta,0)&= z_0(\zeta), \qquad \zeta\ge 0,\\
     \frac{\partial w}{\partial \zeta}(\zeta,0)&= z_1(\zeta), \qquad \zeta\ge 0,
  \end{align*}
where  $u\in \F$ and $z_0, z_1\in L^2[0,\infty)$,  possesses a unique solution satisfying
\[ \int_0^\infty \rho(\zeta)\left[  \frac{\partial w}{\partial t} (\zeta,t)\right]^2 +  T(\zeta)\left[  \frac{\partial w}{\partial \zeta} (\zeta,t)\right]^2 d\zeta \le \int_0^\infty \frac{z_0^2(\zeta)}{\rho(\zeta)}  +  T(\zeta)z_1^2(\zeta) d \zeta \]
for $t>0$, which means that the energy of the system is nonincreasing.
\end{enumerate}
\end{example}


\begin{thebibliography}{999}
\bibitem[Au16]{Augner}
 \newblock B.~Augner,
 \newblock {\em Stabilisation of Infinite-dimensional Port-Hamiltonian Systems},
 \newblock Ph.D.~thesis, University of Wuppertal, 2016. Available from: 
\url{http://elpub.bib.uni-wuppertal.de/edocs/dokumente/fbc/mathematik/diss2016/augner/dc1613.pdf}.

\bibitem[AuJa14]{AJ}
\newblock B.~Augner and B.~Jacob,
 \newblock Stability and stabilization of infinite-dimen\-sional
linear port-Hamiltonian systems, {\em Evolution Equations and Control Theory}, \textbf{3} (2) (2014), 207--229.

\bibitem[BaCo16]{BastinCoron}
\newblock G.~Bastin and J.-M.~Coron,
\newblock {\em Stability and Boundary Stabilization of 1-D Hyperbolic Systems},
\newblock (2016), Birkh\"auser, Basel.

\bibitem[BeKu13]{BK}
 \newblock G.~Berkolaiko and P.~Kuchment,
  \newblock {\em Introduction to Quantum Graphs},
 \newblock Mathematical surveys and monographs,  (2013),
American Mathematical Society.

 
 \bibitem[En13]{Engel13}
\newblock K.-J.~Engel,
 \newblock Generator property and stability for generalized difference operators, {\em Journal of Evolution Equations}, 
\textbf{13} (2) (2013), 311--334.

\bibitem[EnFi]{EF}
 \newblock K.~J.~Engel and M.~Fijavz,
 \newblock Waves and diffusion on metric graphs with general vertex conditions,
 \url{https://arxiv.org/abs/1712.03030}.

 \bibitem[EnNa99]{EN}
 \newblock K.-J.~Engel and R.~Nagel,
 \newblock {\em One-Parameter Semigroups for Linear Evolution Equations},
 \newblock (1999), Springer Science \& Business Media, Berlin Heidelberg.
 
\bibitem[EnNa06]{Engel2006}
 \newblock K.-J.~Engel and R.~Nagel,
 \newblock {\em A Short Course on Operator Semigroups},
 \newblock (2006), Springer Science \& Business Media, Berlin Heidelberg.
 
 \bibitem[HiPr05]{HiPr05}
 \newblock D.~Hinrichsen, A.J.~Pritchard,
 \newblock {\em Mathematical Systems Theory I: Modelling, State Space Analysis, Stability 
and Robustness},
 \newblock  (2005), Springer Science \& Business Media, Berlin Heidelberg.

\bibitem[JaMoZw15] {JaMoZw}
    \newblock B.~Jacob, K.~Morris, and H.~Zwart,
     \newblock {$C_0$-semigroups for hyperbolic partial differential equations on a one-dimensional spatial domain},
	\newblock {\em Journal of Evolution Equations}, \textbf{15}(2) (2015), 439--502.
 
\bibitem[JaZw12]{JZ}
 \newblock B.~Jacob and H.J.~Zwart,
 \newblock {\em Linear Port-Hamiltonian Systems on Infinite-dimensional Spaces},
 \newblock Operator Theory: Advances and Applications, \textbf{223} (2012),
Birkh\"auser, Basel.


\bibitem[Ko+08]{KPS}
\newblock V.~Kostrykin, J.~Potthoff and R.~Schrader,
\newblock Contraction semigroups on metric graphs
\newblock In: Analysis on graphs and its applications, Proc. Sympos. Pure Math., \textbf{77} (2008), 42-458.
 
 \bibitem[KoSc99]{KS}
 \newblock V.~Kostrykin and R.~Schrader,
 \newblock Kirchoff's rule for quantum wires,
 \newblock {J. Phys. A} \textbf{32} (1999), 595-630.
	
\bibitem [KuZw15]{KurulaZwart}
    \newblock M.~Kurula and H.~Zwart,
     \newblock {Linear wave systems on {$n$}-{D} spatial domains},
		 \newblock {\em Internat. J.~Control}, \textbf{88} (5) (2015), 1063--1077.

\bibitem[LeZwMa05]{GZM}
 \newblock Y.~Le~Gorrec, H.~Zwart, and B.~Maschke,
 \newblock Dirac structures and boundary control systems associated with skew-symmetric differential operators,
 \newblock {\em SIAM J.~Control Optim.}, \textbf{44} (2005), 1864--1892.
 
\bibitem[Mu14]{Delio}
\newblock D.~Mugnolo,
\newblock {\em Semigroup methods for evolution equations on networks},
\newblock (2014), Springer, Berlin.

\bibitem[MuNoSe16]{MuNoSe16} 
\newblock D.~Mugnolo, D.~Noja, and C.~Seifert,
\newblock Airy-type evolution equations on star graphs,
\newblock {\em Preprint}, (2016), Available from: 
\url{https://arxiv.org/pdf/1608.01461.pdf}.

 \bibitem[Sc+15]{SSVW}
 \newblock C. Schubert, C.~Seifert, J.~Voigt and M.~Waurick,
\newblock Boundary systems and \mbox{(skew-)}self-adjoint operators on infinite metric graphs,
\newblock {\em Mathematische Nachrichten}, \textbf{288} (14-15) (2015), 1776--1785.


 \bibitem[VaMa02]{VanDerSchaftMaschke_2002} 
 \newblock A.J.~van der Schaft and B.M.~Maschke,
 \newblock Hamiltonian formulation of distributed parameter systems with boundary energy flow,
 \newblock {\em J.~Geom. Phys.}, \textbf{42} (2002), 166--174.

\bibitem[Vi07]{Villegas_2007}
 \newblock J.A.~Villegas,
 \newblock {\em A port-Hamiltonian Approach to Distributed Parameter Systems},
 \newblock Ph.D.~thesis, Universiteit Twente in Enschede, 2007. Available from: 
\url{http://doc.utwente.nl/57842/1/thesis_Villegas.pdf}.

 \bibitem[Zw+10]{ZGMV}
 \newblock H.~Zwart, Y.~Le Gorrec, B.~Maschke and J.~Villegas,
 \newblock Well-posedness and regularity of hyperbolic boundary control
              systems on a one-dimensional spatial domain,
 \newblock {\em ESAIM Control Optim.~Calc.~Var.}, \textbf{16} (4) (2010), 1077-1093.  

 
\end{thebibliography}
\end{document}